%-----------------------------------------------------------------------
%  
%-----------------------------------------------------------------------
%
%     %     
%
%%%%%%%%%%%%%%%%%%%%%%%%%%%%%%%%%%%%%%%%%%%%%%%%%%%%%%%%%%%%%%%%%%%%%%%%

\documentclass{amsart}
%\documentclass{proc-l}

%     Update the information and uncomment if AMS is not the copyright
%     holder.
%\copyrightinfo{2009}{American Mathematical Society}

\newtheorem{theorem}{Theorem}[section]
\newtheorem{lemma}[theorem]{Lemma}

\newtheorem{proposition}[theorem]{Proposition}
\newtheorem{corollary}[theorem]{Corollary}

\theoremstyle{definition}
\newtheorem{definition}[theorem]{Definition}

\theoremstyle{remark}

\numberwithin{equation}{section}

%%%%%%%%%%%%%%%%%%%%%%%%%%% Start of Doug Macros
 
\newcommand\style{\mathcal }          %%% calligraphic

%%%% Hilbert spaces, subspaces, and B(H)
                        %%% Hilbert space H
                       %%% Hilbert space K
                       %%%% Subspace L
    %%%% B(H)
     %%%% B(K)

\newcommand{\B}{\style{B}}
\newcommand{\M}{\style{M}}

\newcommand\A{{\style A}}
\renewcommand{\H}{\style{H}}
\newcommand{\K}{\style K}

%%%%% matrix algebras, spaces, cones

            %%%%%%% tri diagonal matrices

%%%%%% groups
       %%% unitary group

         %%% free group
         %%% free group
         %%% free group

        %%%% Lie group G
        %%%% Lie group H
        %%%% Lie group U
        %%%% Lie group SU
        %%%% Lie group GL
      %%%%%%%%% automorphism group

%%%%%% operator systems

\newcommand\osr{{\style R}}
\newcommand\oss{{\style S}}
\newcommand\ost{{\style T}}

                    %%%% kernel J
  %%%% injective envelope of S

%%%%%% tensor cones

%%%%%% tensor products
\newcommand\omin{\otimes_{\rm min}}
\newcommand\omax{\otimes_{\rm max}}

   %%%%% coi inclusion

%%%%% C$^*$-algebras

                                  %%%%% ideal J
                                %%%%% ideal K
                              %%% C$^*$-algebra generated by
              %%% C$^*$-envelope of
              %%% universal C$^*$-algebra of
                              %%% W$^*$-algebra generated by
 
%%%%%%% operations
                            %%%  

%%%%%% spaces of cp maps

%%Michelle macros and commands
%\newcommand{\style}{\mathcal }
\newcommand{\MnC}{\style M _{n}(\mathbb{C})} %% all nxn matrices with complex number entries
 %% all mxm matrices with complex number entries
 %% all pxp matrices with complex number entries
 %% all qxq matrices with complex number entries
 %%operator system of nxn Toeplitz matrices

\newcommand{\ran}{\text{ran} \:}

%%%%%%%%%%%%%%%%%%%%%%%%%%% End of Doug macros

%%%%%%%%%%%%%%%%%%%%%%% Tilte/Abstract

\begin{document}

\title[Toeplitz Separability, Entanglement, and Complete Positivity]{Toeplitz Separability, Entanglement, and Complete Positivity Using Operator System Duality}

\author{Douglas Farenick}
\author{Michelle McBurney}
\address{Department of Mathematics and Statistics, University of Regina, Regina, Saskatchewan S4S 0A2, Canada}
\curraddr{}
\email{douglas.farenick@uregina.ca}
\email{mmm034@uregina.ca}
\thanks{Supported in part by the NSERC Discovery Grant program}
 
\subjclass[2020]{46L07, 47L07}

\date{}

\dedicatory{In memory of Chandler Davis}

%    "Communicated by" -- provide editor's name; required.
\commby{}

\begin{abstract}
A new proof is presented of a theorem of L.~Gurvits, which states that 
the cone of positive block-Toeplitz matrices with matrix entries 
has no entangled elements. We also show that in the cone of positive
Toeplitz matrices with Toeplitz entries, entangled elements exist in all dimensions.
The proof of the Gurvits separation theorem is achieved by making use of the
structure of the operator system dual of the operator system 
$C(S^1)^{(n)}$ of $n\times n$ Toeplitz matrices
over the complex field, and by determining precisely the structure of the generators of the extremal
rays of the positive cones of the operator systems $C(S^1)^{(n)}\omin\B(\H)$ and 
$C(S^1)_{(n)}\omin\B(\H)$, where $\H$ is an arbitrary Hilbert space
and $C(S^1)_{(n)}$ is the operator system
dual of $C(S^1)^{(n)}$. 
Our approach also has the advantage of providing some 
new information concerning positive Toeplitz matrices whose entries are from $\B(\H)$ when $\H$ has infinite dimension. 
In particular,
we prove that normal positive linear maps $\psi$ on $\B(\H)$ are partially completely positive in the sense
that $\psi^{(n)}(x)$ is positive whenever $x$ is a positive $n\times n$ Toeplitz matrix with entries from $\B(\H)$.
We also establish a certain factorisation theorem for positive Toeplitz matrices (of operators), showing 
an equivalence between the Gurvits approach to separation
and an earlier approach of T.~Ando to universality.
\end{abstract}

\maketitle

%%%%%%%%%%%%%%%%%%%%%%%%% Text of the Paper
 
\section{Introduction}
If $\otimes_\sigma$ is an operator system tensor product of operator systems $\osr$ and $\oss$, then an element $x$
in the positive cone of $\osr\otimes_\sigma\oss$ is said to be $\otimes_\sigma$-separable if
\[
x=\sum_{j=1}^m r_j\otimes s_j,
\]
for some positive $r_j\in\osr$ and positive $s_j\in\oss$. If a positive $x$ is not $\otimes_\sigma$-separable, then it is
said to be $\otimes_\sigma$-entangled. When the operator system tensor product $\otimes_\sigma$ is the spatial one $\omin$, then 
we simply use the terms ``separable'' and ``entangled'' to describe the two possible scenarios for positive elements of $\osr\omin\oss$.

Following \cite{connes-vansuijlekom2021,farenick2021,hekkelman2022,vansuijlekom2021}, 
the operator system of $n\times n$ Toeplitz matrices over the complex numbers 
is denoted by $C(S^1)^{(n)}$. The canonical linear 
basis for $C(S^1)^{(n)}$ is the one given by $\{r_{-n+1},\dots,r_{-1},r_0,r_1,\dots,r_{n-1}\}$, whereby
\[
r_k\;=\; \left\{
       \begin{array}{lcl}
           s^k   &:\;&      \mbox{if }k\geq0  \\
            (s^*)^k          &:\;&       \mbox{if }k < 0
           
      \end{array}
      \right\}
      \quad\mbox{ and }\quad
s= \left[\begin{array}{ccccc} 0&&&&  \\ 1&0&&&  \\ &1&0&&  \\ &&\ddots&\ddots& \\ &&&1&0 \end{array}\right] .
\]
The identity matrix 
$r_0$ serves as the Archimedean order unit 
for the operator system $C(S^1)^{(n)}$.

If $\oss$ is any complex vector space, then an
element $x=\displaystyle\sum_{\ell=-n+1}^{n-1}r_\ell\otimes\tau_\ell$ of the algebraic tensor product $C(S^1)^{(n)}\otimes\oss$
can be canonically identified with the (block Toeplitz) matrix
\begin{equation}\label{tms-intro}
x 
=\left[ \begin{array}{cccccc} 
\tau_0 & \tau_{-1} & \tau_{-2} &  \dots&  \tau_{-n+2}& \tau_{-n+1} \\
\tau_1 & \tau_0 & \tau_{-1} & \tau_{-2}& \dots & \tau_{-n+2} \\
\tau_2 & \tau_1 & \tau_0 & \tau_{-1} &\ddots&   \vdots\\
\vdots & \ddots & \ddots & \ddots &\ddots &\tau_{-2} \\
\tau_{n-2} &   &  \ddots & \ddots &\ddots & \tau_{-1} \\
\tau_{n-1} & \tau_{n-2} & \dots & \tau_{2} &\tau_{1}&\tau_0
\end{array}
\right].
\end{equation}
Our first interest in the present paper is with the case where $\oss=\M_p(\mathbb C)$, the C$^*$-algebra of $p\times p$
complex matrices. In this regard, the following theorem was communicated in \cite{gurvits2001,gurvits--burnam2002}.

\begin{theorem}[Gurvits]\label{main result} Every positive element of $C(S^1)^{(n)}\omin\M_p(\mathbb C)$ is separable.
\end{theorem}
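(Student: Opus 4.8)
My plan is to establish the result by passing to the operator system dual of $C(S^1)^{(n)}$ and reducing separability to a decomposition of positive Toeplitz matrices. First I would recall the explicit description of positive elements of $C(S^1)^{(n)}\omin\M_p(\mathbb C)$: by \eqref{tms-intro} these are precisely the positive semidefinite block-Toeplitz matrices with $p\times p$ matrix entries. The key classical input I would invoke is the factorisation of such matrices: a positive block-Toeplitz matrix $T=[\tau_{i-j}]_{i,j=1}^n$ admits an outer/Cholesky-type factorisation $T = V^*V$ where $V$ has a suitable triangular structure, or — more to the point for separability — a representation reflecting the structure of the operator system dual $C(S^1)_{(n)}$. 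The duality isomorphism identifies $C(S^1)^{(n)}{}^d$ with an operator system whose positive cone is generated by rank-one positive functionals, and matricial positivity over $\M_p$ translates, via the canonical shuffle, into a statement about $\M_p$-valued completely positive maps on $C(S^1)^{(n)}$.

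The main steps, in order: (1) Identify positive elements of $C(S^1)^{(n)}\omin\M_p(\mathbb C)$ with unital (up to scaling) completely positive maps, or with positive block-Toeplitz matrices directly. (2) Use the structure theorem for the dual operator system $C(S^1)_{(n)}$ to pin down the extremal rays of the positive cone of $C(S^1)^{(n)}\omin\M_p(\mathbb C)$ — these should be generated by elementary tensors $r\otimes s$ with $r$ a positive Toeplitz matrix of a particularly simple (rank-one-like, i.e.\ "$v v^*$" with $v$ a Vandermonde-type vector) form and $s$ a positive rank-one matrix in $\M_p$. (3) Since every positive element is a finite sum of generators of extremal rays (the cone being finite-dimensional and closed, so that Carath\'eodory's theorem applies), conclude that every positive element is a sum of elementary tensors $r_j\otimes s_j$ with $r_j\geq 0$ in $C(S^1)^{(n)}$ and $s_j\geq 0$ in $\M_p$, which is exactly $\omin$-separability.

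The crucial computation — and the step I expect to be the main obstacle — is Step (2): showing that the extreme rays of the cone of positive block-Toeplitz matrices are generated by tensors whose Toeplitz factor is itself positive in $C(S^1)^{(n)}$. Concretely, one must show that an extremal positive block-Toeplitz matrix factors as $T = w w^*$ for a single "block-rank-one" vector $w$ whose entries are governed by a scalar Toeplitz structure, i.e.\ $w_k = \lambda^k \xi$ for a unimodular (or complex) scalar $\lambda$ and fixed $\xi\in\mathbb C^p$, so that $T = [\lambda^{i-j}]\otimes \xi\xi^*$ with $[\lambda^{i-j}]$ a positive Toeplitz matrix. The spectral/Fej\'er--Riesz theory of Toeplitz forms (or, equivalently, the trigonometric moment problem and the structure of $C(S^1)_{(n)}$ as described in the cited works) is what forces this rank-one-with-Toeplitz-structure conclusion; marshalling it cleanly through the operator-system duality, rather than by brute-force matrix manipulation, is the heart of the argument.

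Once these pieces are in place, separability of an arbitrary positive $T$ follows immediately by writing $T=\sum_j c_j\,[\lambda_j^{i-k}]\otimes\xi_j\xi_j^*$ with $c_j>0$, and noting each summand is a positive elementary tensor.
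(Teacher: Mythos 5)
Your overall strategy---identify the generators of the extremal rays of the cone $\left(C(S^1)^{(n)}\omin\M_p(\mathbb C)\right)_+$, show they are elementary tensors of the form $T_n(\lambda)\otimes\xi\xi^*$, and then decompose an arbitrary positive element as a finite sum of such generators because the cone is closed, pointed, and finite-dimensional---is exactly the strategy of the paper, and your description of the extremal rays is the correct one. However, the argument as written has a genuine gap precisely at the step you yourself flag as the main obstacle: you assert that an extremal positive block-Toeplitz matrix must factor as $ww^*$ with $w_k=\lambda^k\xi$, and attribute this to ``Fej\'er--Riesz theory'' or ``the trigonometric moment problem,'' but neither tool delivers this conclusion without substantial further work. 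Note in particular that extremality in a linear section of the positive semidefinite cone (which is what the block-Toeplitz cone is) does not by itself force rank one, and even granting rank one it does not force the Vandermonde-tensor structure of the factoring vector; the claim that the extreme rays are separable elementary tensors is essentially equivalent to the theorem itself, so it cannot simply be asserted.

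The paper closes this gap by a specific chain of operator-system arguments that your proposal gestures at but does not execute. By the Toeplitz duality theorem, $C(S^1)^{(n)}$ is the operator system dual of $C(S^1)_{(n)}$, and the general positivity criterion for $\osr^d\omin\oss$ identifies a positive $T\in C(S^1)^{(n)}\omin\B(\H)$ with a completely positive map $\hat T:C(S^1)_{(n)}\rightarrow\B(\H)$, $\hat T(f)=\sum_\ell \hat f(-\ell)\tau_\ell$; a direct computation (Lemma \ref{pure1}) shows $T$ is pure in the cone if and only if $\hat T$ is pure in $\mathcal C\mathcal P(C(S^1)_{(n)},\B(\H))$. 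The structural input is then Arveson's pure extension theorem, which extends the pure map $\hat T$ from the operator subsystem $C(S^1)_{(n)}$ to a pure completely positive map on all of $C(S^1)$, together with the fact that pure completely positive maps of a C$^*$-algebra into $\B(\H)$ are compressions of irreducible representations; since the irreducible representations of $C(S^1)$ are point evaluations, one gets $\hat T(g)=\alpha g(\lambda)Q$ and hence $\tau_\ell=\lambda^{-\ell}\alpha Q$, i.e.\ $T=T_n(\lambda^{-1})\otimes(\alpha Q)$. If you prefer to avoid Arveson's theorem, the ``moment problem'' route can be made to work, but it takes a different shape than you describe: Ando's truncated moment theorem produces a completely positive map $\phi:C(S^1)\rightarrow\M_p(\mathbb C)$ with $\phi(z^\ell)=\tau_\ell$, hence a positive $\M_p(\mathbb C)$-valued measure $\mu$ with $\tau_\ell=\int z^\ell\,d\mu$, and one obtains separability from the integral representation $T=\int T_n(z)\otimes d\mu(z)$ by approximation and closedness of the separable cone---this bypasses the extreme-ray analysis entirely rather than supplying it. Either way, you need to commit to one of these mechanisms and carry it out; as it stands, Step (2) is a restatement of the theorem rather than a proof of it.
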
 

One of the main goals of the present paper is to provide an
alternative approach to proving Theorem \ref{main result}
by making use of recent work in
\cite{connes-vansuijlekom2021,farenick2021} on operator system duality and by considering the structure of the extremal rays
of the positive cone of $C(S^1)^{(n)}\omin\M_p(\mathbb C)$. The advantage of this approach is that it is somewhat more general in that we
consider Toeplitz matrices whose entries are bounded linear operators on Hilbert spaces of arbitrary dimension. 
A secondary
advantage of this approach is that it applies to other operator systems---in particular, it applies to the operator system dual of $C(S^1)^{(n)}$, which is denoted 
by $C(S^1)_{(n)}$ and consists of trigonometric polynomials of degree at most $n-1$.
 
The operator systems $C(S^1)^{(n)}$ and $C(S^1)_{(n)}$ are of classical importance. Yet, only recently have they been linked through duality, and through this
linkage these operator systems are receiving recent renewed attention in areas such as
spectral truncation in noncommutative geometry  
\cite{connes-vansuijlekom2021,hekkelman2022,vansuijlekom2021} and operator system tensor products \cite{farenick2021}.
The purpose of the present paper is to shed light on how this linkage relates to the factorisation 
and separability of positive Toeplitz matrices of Hilbert space operators.

Throughout this paper, $\mathbb C^p$ is considered as a $p$-dimensional Hilbert space, where the inner product on $\mathbb C^p$ is the standard
inner product. Hilbert spaces of arbitrary (finite or infinite) dimension are denoted by $\H$, and $\B(\H)$ denotes the algebra of all bounded linear
operators $x:\H\rightarrow\H$. In the case where $\H=\mathbb C^p$, then $\B(\H)=\M_p(\mathbb C)$.

%%%%%%%%%%%%%%
\section{Duality, Purity, Nuclearity, and Universality}

Recall from \cite{choi--effros1977} that if
$\osr$ is an operator system and $\osr^d$ denotes its dual space, then $\osr^d$ is a matrix-ordered $*$-vector space in which
a matrix $\Phi=[\varphi_{ij}]_{i,j=1}^p$ of linear functionals $\varphi_{ij}:\osr\rightarrow\mathbb C$ is considered
positive whenever the linear map $r\mapsto [\varphi_{ij}(r)]_{i,j=1}^p$ is a completely positive linear map $\hat\Phi:\osr\rightarrow\M_p(\mathbb C)$.
Furthermore, if
$\phi:\osr\rightarrow\oss$ is a linear map of operator systems and $\phi^d:\oss^d\rightarrow\osr^d$ denotes the
adjoint transformation as linear mapping of matrix-ordered $*$-vector spaces, then 
$\phi$ is positive if and only if $\phi^d$ is positive. Likewise,
$\phi\otimes\mbox{\rm id}_{\M_p(\mathbb C)}$ is positive if and only if 
$\phi^d\otimes\mbox{\rm id}_{\M_p(\mathbb C)}$ is positive, for $p\in\mathbb N$.
Importantly, if $\osr$ has finite dimension, then the matrix-ordered space $\osr^d$ is in fact an operator system, 
and any faithful state on $\osr$ serves as an Archimedean order unit for the matrix ordering of $\osr^d$.

For any operator system $\osr$, the cone of positive $p\times p$ matrices over $\osr$ is denoted by $\M_p(\osr)_+$.
With respect to the operator system tensor product $\omin$, we have
\[
\M_p(\osr)_+ = \left(\osr\omin\M_p(\osr)\right)_+.
\]
Because $\M_p(\osr)_+$ is a cone, it may possess extremal rays.
(Recall that a face $\mathcal F$ in a proper convex cone $\mathcal C$ is an extremal ray
if there exists an element $\phi\in\mathcal C$, called a generator of the extremal ray,
such that
$\mathcal F= \left\{s\phi\,|\,s\in\mathbb R,\,s\geq0\right\}$.)

We shall adopt the term ``pure'' for elements in convex cones that generate extremal rays.

\begin{definition} An element $\phi$ of a 
convex cone $\mathcal C$ is \emph{pure} if the equation $\vartheta+\omega=\phi$,
for some $\vartheta, \omega\in\mathcal C$, holds only if there exists 
a scalar $s\in[0,1]$ such that $\vartheta=s\phi$ and $\omega=(1-s)\phi$.
\end{definition}

The purpose in identifying pure elements of proper convex cones $\mathcal C$ in finite-dimensional vector
spaces is that they generate, through arbitrary finite sums of pure elements, all 
elements of $\mathcal C$. 
The cones $\mathcal C$ that shall be of interest here are the cones $\M_p(\osr)_+$ and
$\mathcal C\mathcal P(\osr,\oss)$, the latter being the cone
of completely positive linear maps of the operator system $\osr$ into the operator system $\oss$.

The matrix ordering on the dual space of an operator system $\osr$ 
has the following positivity criterion for a matrix $\Phi=[\varphi_{ij}]_{i,j=1}^p\in\M_p(\osr^d)$ 
of linear functionals $\varphi_{ij}$ on $\osr$:
\begin{equation}\label{e:duality positivity}
\Phi\in \M_p(\osr^d)_+\;\mbox{ if and only if}\;\hat\Phi\in \mathcal C\mathcal P(\osr, \M_p(\mathbb C)),
\end{equation}
where $\hat\Phi:\osr\rightarrow \M_p(\mathbb C)$ is the linear map 
\[
\hat\Phi(x)= \left[ \varphi_{ij}(x)\right]_{i,j=1}^p, \mbox{ for } x\in \osr.
\]
Furthermore, when
$\osr$ has finite dimension,
the positivity condition \eqref{e:duality positivity} 
admits a more general form whereby the matrix
algebra $\M_p(\mathbb C)$ is replaced by an arbitrary operator system 
$\oss$ \cite[Section 4]{kavruk--paulsen--todorov--tomforde2011},
\cite[Lemma 8.5]{kavruk--paulsen--todorov--tomforde2013}. Specifically, if
\[
t=\sum_{j=1}^m x_j\otimes y_j\in \osr\omin\oss,
\]
and if $\hat t:\osr^d\rightarrow \oss$ is the linear map defined by
\[
\hat t(\varphi)=\sum_{j=1}^m \varphi(x_j)y_j, \mbox{ for }\varphi\in\osr^d,
\]
then $t\in (\osr\omin\oss)_+$ if and only if $\hat t$ is completely positive.
Thus, from $(\osr^d)^d=\osr$, we have
\begin{equation}\label{e:duality positivity gen}
\Phi\in (\osr^d\omin\oss)_+\;\mbox{ if and only if}\;\hat\Phi\in \mathcal C\mathcal P(\osr, \oss).
\end{equation}

Returning now to the study of positive Toeplitz matrices, 
let $C(S^1)$ denote the unital abelian C$^*$-algebra of all continuous functions $f:S^1\rightarrow \mathbb C$,
where $S^1\subset\mathbb C$ is the unit circle. For each $n\in\mathbb N$, 
$C(S^1)_{(n)}$ shall denote the vector space of those $f\in C(S^1)$ for which the Fourier coefficients $\hat f(k)$ of
$f$ satisfy $\hat f(k)=0$ for every $k\in\mathbb Z$ such that $|k|\geq n$. Hence, every $f\in C(S^1)_{(n)}$ is given by
\[
f(z)=\sum_{k=-n+1}^{n-1}\alpha_kz^k ,
\]
as a function of $z\in S^1$, 
where each $\alpha_k =\hat f(k)=\frac{1}{2\pi}\displaystyle\int_0^{2\pi}f(e^{i\theta})e^{-ik\theta}\,d\theta$. 
The vector space $C(S^1)_{(n)}$ is an 
operator system via the matrix ordering that arises from the identification
of $\M_p\left( C(S^1)_{(n)}\right)$, the space of $p\times p$ matrices with entries from $C(S^1)_{(n)}$, with the space
of continuous functions $F:S^1\rightarrow\M_p(\mathbb C)$, and where the Archimedean order unit is the canonical
one (namely, the constant function $\chi_0:S^1\rightarrow \mathbb C$ given by $\chi_0(z)=1$, for $z\in S^1$).
(The operator system of all $n\times n$ Toeplitz matrices over $\mathbb C$, which we have been denoting by
$C(S^1)^{(n)}$, has the identity matrix in $\M_{n}(\mathbb C)$ as the canonical
Archimedean order unit for $C(S^1)^{(n)}$.)

The following theorem was obtained in \cite{connes-vansuijlekom2021,farenick2021}.

\begin{theorem}[Toeplitz Duality]\label{toeplitz duality}
The  linear map $\phi:C(S^1)^{(n)}\rightarrow\left(C(S^1)_{(n)}\right)^d$ that sends a Toeplitz matrix
$t=[\tau_{k-\ell }]_{k,\ell=0}^{n-1}\in\M_{n}(\mathbb C)$ to the 
linear functional $\varphi_t:C(S^1)_{(n)} \rightarrow\mathbb C$ defined by
\begin{equation}\label{lf defn}
\varphi_t(f)=\sum_{k=-n+1}^{n-1}\tau_{-k}\hat f(k),
\end{equation}
for $f\in C(S^1)_{(n)}$, is a unital complete order isomorphism. 
That is, $C(S^1)^{(n)}$ is the operator system dual of $C(S^1)_{(n)}$.
\end{theorem}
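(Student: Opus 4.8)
The plan is to verify that $\phi$ is a unital linear bijection and then to show, for every $p\in\mathbb N$, that the amplification $\phi^{(p)}$ carries the positive cone of $\M_p(C(S^1)^{(n)})$ onto the positive cone of $\M_p\bigl((C(S^1)_{(n)})^d\bigr)$; this is precisely the assertion that $\phi$ is a unital complete order isomorphism. (Since $C(S^1)_{(n)}$ is finite-dimensional, $(C(S^1)_{(n)})^d$ is an operator system, so the statement makes sense.) Bijectivity is immediate: $\phi$ sends the canonical basis element $r_j$ to the linear functional $f\mapsto\hat f(-j)$, and $\{f\mapsto\hat f(m):|m|<n\}$ is exactly the basis of $(C(S^1)_{(n)})^d$ dual to $\{z^m:|m|<n\}$. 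Unitality is equally immediate: $\phi(r_0)$ is the functional $f\mapsto\hat f(0)=\frac{1}{2\pi}\int_0^{2\pi}f(e^{i\theta})\,d\theta$, which is a faithful state on $C(S^1)_{(n)}$ (a non-negative trigonometric polynomial of integral zero vanishes identically), hence the Archimedean order unit of $(C(S^1)_{(n)})^d$.

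The heart of the matter is the order structure. I would identify an element of $\M_p(C(S^1)^{(n)})=C(S^1)^{(n)}\omin\M_p(\mathbb C)$ with a block Toeplitz matrix $T=[\tau_{k-\ell}]_{k,\ell=0}^{n-1}$, where $\tau_j\in\M_p(\mathbb C)$ and $\tau_{-j}=\tau_j^*$, such an element being positive exactly when $T\ge0$ in $\M_{np}(\mathbb C)$. By the duality positivity criterion \eqref{e:duality positivity gen}, applied with $\osr=C(S^1)_{(n)}$ and $\oss=\M_p(\mathbb C)$, the matrix $\phi^{(p)}(T)$ is positive in $\M_p\bigl((C(S^1)_{(n)})^d\bigr)$ if and only if the associated map $\Psi_T\colon C(S^1)_{(n)}\to\M_p(\mathbb C)$ given by $\Psi_T(f)=\sum_{|k|<n}\hat f(k)\,\tau_{-k}$ is completely positive. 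Thus the whole theorem reduces to a single equivalence: $T\ge0$ if and only if $\Psi_T$ is completely positive.

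One implication is easy. If $\Psi_T$ is completely positive, apply $\Psi_T^{(n)}$ to the element $F\in\M_n\bigl(C(S^1)_{(n)}\bigr)_+$ given on $S^1$ by $F(z)=w(z)w(z)^*$ with $w(z)=(1,z^{-1},\dots,z^{-(n-1)})^{\mathrm t}$; a one-line computation gives $\Psi_T^{(n)}(F)=[\tau_{k-\ell}]_{k,\ell=0}^{n-1}=T$, so $T\ge0$. For the converse, suppose $T\ge0$. The matrix-valued truncated trigonometric moment problem (the Carath\'eodory--Toeplitz extension theorem together with the Herglotz--Bochner/Naimark theorem) then yields a positive $\M_p(\mathbb C)$-valued Borel measure $E$ on $S^1$ with $\int_{S^1}z^{-m}\,dE=\tau_m$ for all $|m|<n$. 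Consequently $\Psi_T(f)=\sum_{|k|<n}\hat f(k)\int_{S^1}z^{k}\,dE=\int_{S^1}f\,dE$ for every $f\in C(S^1)_{(n)}$, so $\Psi_T$ is the restriction to $C(S^1)_{(n)}$ of the completely positive map $g\mapsto\int_{S^1}g\,dE$ on $C(S^1)$, and is therefore completely positive.

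The only substantive point is this last implication, namely the existence of the representing operator measure $E$; everything else is bookkeeping with the definitions and the criterion \eqref{e:duality positivity gen}. To keep the argument self-contained one can produce $E$ by a lurking-isometry dilation: regard $T\ge0$ as a positive semidefinite form on $\mathbb C^n\otimes\mathbb C^p$, note that the block shift is isometric for this form because $T$ is Toeplitz, dilate it to a unitary $U$ on a larger Hilbert space, and take $E$ to be the spectral measure of $U$ compressed to the copy of $\mathbb C^p$ sitting in the $0$-th block. A more conceptual alternative bypasses the moment problem: recognise $\Gamma\colon C(S^1)\to C(S^1)^{(n)}$, $f\mapsto[\hat f(k-\ell)]_{k,\ell=0}^{n-1}$, as the compression of the multiplication representation of $C(S^1)$ on $L^2(S^1)$ to $\mathrm{span}\{1,z,\dots,z^{n-1}\}$, verify that $\Gamma$ is a unital completely positive complete quotient map, and identify $(C(S^1)^{(n)})^d$ with the range of $\Gamma^d$ — the measures whose Fourier transform is supported in $\{|m|<n\}$, that is $C(S^1)_{(n)}$ with its intrinsic order; dualising once more returns $\phi$.
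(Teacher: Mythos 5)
Your argument is correct, but note that the paper does not actually prove Theorem \ref{toeplitz duality}: it is quoted from \cite{connes-vansuijlekom2021,farenick2021}, so the comparison is with those proofs and with the machinery the paper builds around the theorem. Your bookkeeping (bijectivity on the dual bases, $\varphi_{r_0}=\hat f(0)$ being a faithful state and hence an admissible Archimedean order unit, and the reduction via \eqref{e:duality positivity} to the single equivalence ``$[\tau_{k-\ell}]\ge 0$ in $\M_{np}(\mathbb C)$ iff $\Psi_T$ is completely positive'') matches what any proof must do, and your ``easy'' direction, evaluating $\Psi_T^{(n)}$ on $F(z)=w(z)w(z)^*$, is exactly the paper's device of the universal positive Toeplitz matrix $T_n(z)=\gamma_n(z)\gamma_n(z)^*$. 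The genuine difference is in the hard direction: you derive complete positivity of $\Psi_T$ from the matrix-valued truncated trigonometric moment problem (Carath\'eodory--Toeplitz plus a lurking-isometry/Naimark dilation), which is precisely Ando's universality theorem \cite{ando1970} --- a result the paper treats as a known black box (see Section 6, where universality and factorisation are shown to be equivalent) rather than as an ingredient of the duality proof. The published proofs instead go through the (operator-valued) Fej\'er--Riesz factorisation of positive trigonometric polynomials of degree less than $n$, which stays entirely inside $C(S^1)_{(n)}$ and avoids dilation theory. Your route buys a self-contained dilation-theoretic proof and makes transparent why duality, universality, and factorisation are all the same statement; the Fej\'er--Riesz route is more elementary in that it needs no auxiliary Hilbert space. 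Two small caveats: your final ``more conceptual alternative'' via the compression map $\Gamma$ is only a sketch, since the assertion that $\Gamma$ is a complete quotient map is essentially equivalent to the theorem itself; and the lurking-isometry construction should be stated for the semi-inner product induced by $T$ (passing to the quotient by its kernel) before extending the block shift to a unitary, but this is routine.
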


Recall from \cite{kavruk--paulsen--todorov--tomforde2011}  that the maximal operator system tensor product, $\omax$, 
 of operator systems $\osr$ and $\ost$ is the operator system structure on the algebraic tensor product
$\osr\otimes\ost$ obtained by
declaring a matrix $x\in\M_p(\osr\otimes\ost)$ to be positive 
if, for each $\varepsilon>0$, there are $n,q\in\mathbb N$, $a\in \M_n(\osr)_+$, $b\in\M_q(\ost)_+$, and a linear map 
$\delta:\mathbb C^p\rightarrow\mathbb C^n\otimes\mathbb C^q$
such that
\[
\varepsilon(e_\osr\otimes e_\ost) + x= \delta^*(a\otimes b)\delta.
\]
(Here, $e_\osr$ and $e_\ost$ denote the Archimedean order units of $\osr$ and $\ost$.)
As sets, the inclusion $(\osr\omax\ost)_+\subseteq (\osr\omin\ost)_+$ always holds; however, 
it is typically more difficult for a matrix over $\osr\otimes\ost$ to be ``max positive'' 
than ``min positive.'' Nevertheless, there are operator systems $\osr$ for which the two
(extremal) forms of positivity coincide for all operator systems $\ost$.

\begin{definition} An operator system $\oss$ is \emph{nuclear} if
$\oss\omin\ost=\oss\omax\ost$, for every operator system $\ost$.
\end{definition}

The following matrix has a recurring role in the present paper.

\begin{definition} 
The \emph{universal positive $n\times n$ Toeplitz matrix} 
is the Toeplitz-matrix-valued function $T_n:S^1\rightarrow\M_n(\mathbb C)$ given by
\begin{equation}\label{e:univ T}
T_n(z)=\left[ \begin{array}{cccccc} 
1 & z^{-1} & z^{-2} &  \dots&  z^{-n+2}& z^{-n+1} \\
z & 1 & z^{-1} & z^{-2}& \dots & z^{-n+2} \\
z^2 & z & 1 & z^{-1} &\ddots&   \vdots\\
\vdots & \ddots & \ddots & \ddots &\ddots &z^{-2} \\
z^{n-2} &   &  \ddots & \ddots &\ddots &z^{-1} \\
z^{n-1} & z^{n-2} & \dots & z^{2} &z&1
\end{array}
\right].
\end{equation}
\end{definition}

Because the matrix $T_n(z)$ defined in \eqref{e:univ T} admits a factorisation of the form $T_n(z)=\gamma_n(z)\gamma_n(z)^*$, 
where 
\[
\gamma_n(z)=\left[\begin{array}{c}1 \\ z  \\ z^2 \\ \vdots \\ z^{n-1} \end{array}\right],
\]
the matrix-valued function
$T_n:S^1\rightarrow \M_n(\mathbb C)$ maps $S^1$ into the rank-1 positive $n\times n$ Toeplitz matrices.
The term ``universal'' is used because, for every operator system $\oss$
and positive Toeplitz matrix $x=[\tau_{k-j}]_{k,j=1}^n\in C(S^1)^{(n)}\omin\oss$, 
there exists a completely positive linear map $\phi:C(S^1)_{(n)}\rightarrow\oss$
for which $\tau_\ell=\phi(z^\ell)$, for every $\ell$ \cite{ando1970}, \cite[Theorem 7.10]{farenick2021}.

The transpose of any Toeplitz matrix is implemented by a unitary similarity transformation. Thus, we have 
\[
T_n(z^{-1}) = T_n(z)^t = u^*T_n(z)u_n,
\]
where $x\mapsto x^t$ denotes the transpose transformation and $u_n\in\M_n(\mathbb C)$ is the
unitary matrix $u_n=\displaystyle\sum_{i=1}^ne_{i,n-i+1}$ 
(where $\{e_{ij}\}_{i,j=1}^n $ are the canonical matrix units of 
$\M_n(\mathbb C)$).

We shall show in Section \S4 that $T_n$ is an entangled element of the positive cone of 
$ C(S^1)_{(n)}\omin\M_n(\mathbb C)$, 
for every $n\geq 2$, in contrast to the dual situation of 
$C(S^1)^{(n)}\omin\M_n(\mathbb C)$, which has no entangled positive elements.

%%%%%%%%%%%%%%%%%%%%%%%%%%%%%%%%%%%%%
\section{Separability}

The primary purpose of this section is to give a proof of the separation theorem, Theorem \ref{main result}, by determining
the structure of the pure elements of the cone $\left(C(S^1)^{(n)}\omin\B(\H)\right)_+$ for Hilbert spaces $\H$
of finite or infinite dimension. Thus, this approach addresses a more general context than that considered by Gurvits
in \cite{gurvits2001,gurvits--burnam2002}. In Section \S4, the same methodology is used in determining the
structure of the pure elements of the cone $\left(C(S^1)_{(n)}\omin\B(\H)\right)_+$, replacing $C(S^1)^{(n)}$ by its
operator system dual $C(S^1)_{(n)}$.

The following observation, inspired by \cite[Corollary 2.20]{connes-vansuijlekom2021},
is key to our approach. 

\begin{lemma}\label{pure1} If $\oss$ is any operator system, then an element
$\Phi\in \left(C(S^1)^{(n)}\omin\oss\right)_+$ is pure if and only 
if $\hat\Phi$ is a pure completely positive linear map $C(S^1)_{(n)} \rightarrow\oss$.
\end{lemma}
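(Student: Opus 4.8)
The plan is to leverage the duality correspondence $\Phi \leftrightarrow \hat\Phi$ established in \eqref{e:duality positivity gen}, which already identifies $(C(S^1)^{(n)}\omin\oss)_+$ with $\mathcal{CP}(C(S^1)_{(n)},\oss)$ via the map $\Phi\mapsto\hat\Phi$ (using Theorem \ref{toeplitz duality}, since $C(S^1)^{(n)}=(C(S^1)_{(n)})^d$). The point is that this correspondence is not merely a bijection between the cones but is \emph{affine and linear}: it respects addition and nonnegative scalar multiplication. Therefore decompositions $\Phi=\vartheta+\omega$ in the first cone correspond exactly to decompositions $\hat\Phi=\hat\vartheta+\hat\omega$ in the second, and the defining condition for purity---that every such decomposition forces $\vartheta=s\Phi$, $\omega=(1-s)\Phi$---transports verbatim.

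Concretely, I would proceed as follows. First, recall from the excerpt that the assignment $t\mapsto\hat t$ sending $t=\sum_j x_j\otimes y_j\in C(S^1)^{(n)}\omin\oss$ to $\hat t(\varphi)=\sum_j\varphi(x_j)y_j$ for $\varphi\in(C(S^1)^{(n)})^d=C(S^1)_{(n)}$ is a bijection of $(C(S^1)^{(n)}\omin\oss)_+$ onto $\mathcal{CP}(C(S^1)_{(n)},\oss)$, by \eqref{e:duality positivity gen}. Second, observe that $t\mapsto\hat t$ is a linear map of vector spaces: this is immediate from the formula, since $\widehat{t_1+t_2}=\hat t_1+\hat t_2$ and $\widehat{st}=s\hat t$. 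Third, suppose $\Phi$ is pure in $(C(S^1)^{(n)}\omin\oss)_+$ and write $\hat\Phi=\vartheta'+\omega'$ with $\vartheta',\omega'\in\mathcal{CP}(C(S^1)_{(n)},\oss)$. Pulling back, $\vartheta'=\hat\vartheta$ and $\omega'=\hat\omega$ for uniquely determined $\vartheta,\omega\in(C(S^1)^{(n)}\omin\oss)_+$, and by injectivity and linearity $\widehat{\vartheta+\omega}=\hat\Phi=\hat\Phi$, whence $\vartheta+\omega=\Phi$. Purity of $\Phi$ gives $\vartheta=s\Phi$, $\omega=(1-s)\Phi$ for some $s\in[0,1]$, and applying the hat map yields $\vartheta'=s\hat\Phi$, $\omega'=(1-s)\hat\Phi$. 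Hence $\hat\Phi$ is pure. The converse is symmetric, using that $(\osr^d)^d=\osr$ so that the hat correspondence is invertible with an inverse of the same type.

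The only subtlety---and the step I would treat with a little care rather than wave past---is making sure the correspondence $\Phi\mapsto\hat\Phi$ genuinely lands in, and surjects onto, the cone of \emph{completely positive} maps (not merely positive maps), and that purity is being taken with respect to the cone $\mathcal{CP}(C(S^1)_{(n)},\oss)$ as a subcone of all linear maps. This is exactly what \eqref{e:duality positivity gen} provides, given that $C(S^1)^{(n)}$ is finite-dimensional and $C(S^1)_{(n)}=(C(S^1)^{(n)})^d$ by Theorem \ref{toeplitz duality}; so there is no real obstacle, only bookkeeping. One should also note explicitly that the hat map is a \emph{cone isomorphism} in the strong sense that both it and its inverse preserve positivity, which is what guarantees that $\vartheta,\omega$ obtained by pulling back $\vartheta',\omega'$ are themselves in the positive cone---this is the content of ``$t\in(\osr\omin\oss)_+$ if and only if $\hat t$ is completely positive.'' With that in hand, the proof is a two-line transport-of-structure argument, and I expect the write-up to be short.
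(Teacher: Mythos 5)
Your proposal is correct and is essentially the paper's own argument: the paper's proof is just the concrete instantiation of your transport-of-structure step, explicitly building the Toeplitz matrices $A$ and $B$ with $\hat A=\theta$ and $\hat B=\psi$ from a decomposition $\hat\Phi=\theta+\psi$ (your ``pull back'' via the linear bijection $t\mapsto\hat t$ and the positivity correspondence \eqref{e:duality positivity gen}), and then invoking purity of $\Phi$. No substantive difference.
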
  

\begin{proof}
Assume $\Phi\in \left(C(S^1)^{(n)}\omin\oss\right)_+$ is pure. Thus,
if 
$\Phi=\displaystyle\sum_{\ell=-n+1}^{n-1} r_\ell \otimes \tau_\ell $,
then 
$\hat\Phi=\displaystyle\sum_{\ell=-n+1}^{n-1} \varphi_{r_{\ell}} \tau_\ell$,
where the action of $\hat\Phi$ on $C(S^1)_{(n)} $ is given by
\[
\hat\Phi(f)=\sum_{\ell=-n+1}^{n-1} \hat{f}(-\ell)\tau_\ell,
\]
for every $f \in C(S^1)_{(n)}.$

Suppose that there exists completely positive maps of $\theta, \psi:C(S^1)_{(n)} \rightarrow\oss$ such that $\hat\Phi=\theta+\psi$. 
Let $\theta(z^k)=t_k$ and $\psi(z^k)=s_k$, so that
\[
\theta(f)=\theta\left(\sum_{k=-n+1}^{n-1}\hat f(k) z^k\right)=\sum_{k=-n+1}^{n-1}\hat f(k) \theta(z^k)=\sum_{k=-n+1}^{n-1} \hat{f}(k)t_k
\]
and, similarly,
\[
\psi(f)=\sum_{k=-n+1}^{n-1} \hat{f}(k)s_k,
\]
for every $f\in C(S^1)_{(n)}$. We obtain, therefore, Toeplitz matrices 
\[
T=
\begin{bmatrix}
t_0      & t_1     & \dots  & t_{n-1} \\
t_{-1}   & \ddots       &  \ddots      &    \vdots     \\
\vdots   &    \ddots   & \ddots          & t_1     \\
t_{-n+1} &   \dots  & t_{-1} & t_0     \\
\end{bmatrix}
\mbox{ and }
S=
\begin{bmatrix}
s_0      & s_1     & \dots  & s_{n-1} \\
s_{-1}   & \ddots       &  \ddots      &    \vdots     \\
\vdots   &    \ddots   & \ddots          & s_1     \\
s_{-n+1} &   \dots  & s_{-1} & s_0     \\
\end{bmatrix}.
\]
For each $k$,
\[
\tau_{-k}=\hat\Phi(z^k)=\theta(z^k)+\psi(z^k)=t_k+s_k,
\]
and
so $\tau_{-k}=t_k+s_k=a_{-k}+b_{-k}$, where $a_k=t_{-k}$ and $b_k=s_{-k}$. 
Thus, $\Phi=A+B$, for the Toeplitz matrices $A$ and $B$ determined by the elements
$a_k,b_k\in\oss$, for $k=-n+1,\dots,n-1$.
The way in which $A$ and $B$ are defined leads $\hat{A}=\theta$ and $\hat{B}=\psi$;
thus, $A$ and $B$ are positive in $C(S^1)^{(n)}\omin\oss$. Hence, 
by the purity of $\Phi$, 
$A=s\Phi$ and $B=(1-s)\Phi$, for some $s\in [0,1]$. Thus, 
$\theta=s\hat\Phi$ and $\psi=(1-s)\hat\Phi$, implying that
 $\hat\Phi$ is pure.

The proof of the converse is obviously similar and, thus, omitted.
\end{proof}

\begin{theorem}\label{pure thm} For every Hilbert space $\H$,
a matrix $T\in \left(C(S^1)^{(n)}\omin\B(\H)\right)_+$ is pure if and only if there exists 
a complex number $\lambda\in S^1$, a rank-1 projection $Q\in\B(\H)$, and a scalar $\alpha\geq0$ such that
\[
T = T_n(\lambda^{-1})\otimes (\alpha Q).
\]
\end{theorem}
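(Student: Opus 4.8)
By Lemma~\ref{pure1}, it is enough to prove that a completely positive map $\phi\colon C(S^1)_{(n)}\to\B(\H)$ generates an extremal ray of the cone of completely positive maps if and only if $\phi=\phi_{\lambda,\alpha Q}$, where $\phi_{\lambda,\alpha Q}(f):=f(\lambda)\,\alpha Q$ for some $\lambda\in S^1$, some rank-one projection $Q\in\B(\H)$, and some $\alpha\geq0$. Indeed, the identity $\hat T(z^m)=\tau_{-m}$ established in the proof of Lemma~\ref{pure1} shows that $\hat T=\phi_{\lambda,\alpha Q}$ is equivalent to $\tau_\ell=\lambda^{-\ell}\alpha Q$ for all $|\ell|\leq n-1$, hence to $T=T_n(\lambda^{-1})\otimes\alpha Q$; combining this with Lemma~\ref{pure1} and the duality of Theorem~\ref{toeplitz duality} gives the theorem (the case $T=0$ being the case $\alpha=0$). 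The case $n=1$ is immediate, as $C(S^1)^{(1)}\omin\B(\H)=\B(\H)$, whose pure elements are precisely the non-negative scalar multiples of rank-one projections; so assume $n\geq2$. To describe a completely positive $\phi\colon C(S^1)_{(n)}\to\B(\H)$, extend it to a completely positive map $\tilde\phi\colon C(S^1)\to\B(\H)$ by Arveson's extension theorem, and invoke the Stinespring dilation together with the spectral theorem for the representation of the abelian C$^*$-algebra $C(S^1)$ that arises (equivalently, Naimark's dilation of positive operator-valued measures): this yields a regular positive $\B(\H)$-valued Borel measure $E$ on $S^1$ with $\tilde\phi(f)=\int_{S^1}f\,dE$, so in particular $\phi(1)=E(S^1)$ and $\phi(z^\ell)=\int_{S^1}z^\ell\,dE$.

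For the necessity, suppose $\phi\neq0$ is pure. Purity will be used against three families of decompositions of $\phi$ into completely positive maps. First, for each Borel set $\omega\subseteq S^1$ the maps $f\mapsto\int_\omega f\,dE$ and $f\mapsto\int_{S^1\setminus\omega}f\,dE$, restricted to $C(S^1)_{(n)}$, are completely positive (being integration against the positive $\B(\H)$-valued measures $E|_\omega$ and $E|_{S^1\setminus\omega}$) and sum to $\phi$; purity yields a scalar $s_\omega\in[0,1]$ with $\int_\omega f\,dE=s_\omega\int_{S^1}f\,dE$ for all $f\in C(S^1)_{(n)}$, and the case $f=1$ gives $E(\omega)=s_\omega\,\phi(1)$ for every $\omega$. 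Hence $E=\nu\cdot\phi(1)$, where $\nu$ is the probability measure $\omega\mapsto s_\omega$, so $\phi(f)=\bigl(\int_{S^1}f\,d\nu\bigr)\phi(1)$ for $f\in C(S^1)_{(n)}$. Second, if $\phi(1)=p_1+p_2$ with $p_1,p_2\geq0$, then $f\mapsto\bigl(\int_{S^1}f\,d\nu\bigr)p_i$ is completely positive, being the composition of the positive functional $f\mapsto\int_{S^1}f\,d\nu$ on $C(S^1)_{(n)}$ with $c\mapsto c\,p_i$, and these two maps sum to $\phi$; purity forces $p_1=s\,\phi(1)$, so $\phi(1)$ generates an extremal ray of $\B(\H)_+$ and therefore $\phi(1)=\alpha Q$ with $Q$ a rank-one projection and $\alpha>0$. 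Third, now that $\phi(f)=\bigl(\int_{S^1}f\,d\nu\bigr)\alpha Q$: if $\nu$ were not a point mass then, since $|z|\equiv1$ on $S^1$, the complex measure $(z-c)\,\nu$ with $c:=\int_{S^1}z\,d\nu$ would be nonzero, so some Borel $\omega$ would satisfy $\nu(\omega)=t\in(0,1)$ and $\int_\omega z\,d\nu\neq t\,c$; the completely positive maps $f\mapsto\bigl(\int_\omega f\,d\nu\bigr)\alpha Q$ and $f\mapsto\bigl(\int_{S^1\setminus\omega}f\,d\nu\bigr)\alpha Q$ sum to $\phi$, and purity applied with $f=1$ forces the proportionality constant to equal $t$, whence $f=z$ (which lies in $C(S^1)_{(n)}$ because $n\geq2$) gives $\int_\omega z\,d\nu=t\,c$, a contradiction. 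Hence $\nu=\delta_\lambda$ and $\phi=\phi_{\lambda,\alpha Q}$.

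For the sufficiency, $\phi_{\lambda,\alpha Q}$ is completely positive, being the composition of the evaluation state $f\mapsto f(\lambda)$ with the completely positive map $c\mapsto c\,\alpha Q$. Suppose $\phi_{\lambda,\alpha Q}=\theta+\psi$ with $\theta,\psi$ completely positive. Then $\theta(1)+\psi(1)=\alpha Q$ is rank one, so $\theta(1)=aQ$ and $\psi(1)=bQ$ with $a,b\geq0$, $a+b=\alpha$; writing $Q=\xi\xi^*$, the complete positivity of $\theta$ together with $\theta(1)=aQ$ forces (by the Schwarz inequality) $\operatorname{ran}\theta(f)\subseteq\mathbb C\xi$ and $\theta(f)\eta=0$ for all $\eta\perp\xi$, so $\theta(f)=g_\theta(f)\,\xi\xi^*$ for a positive linear functional $g_\theta$ on $C(S^1)_{(n)}$ with $g_\theta(1)=a$. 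Extending $g_\theta$ to a positive functional on $C(S^1)$ and representing it, via the Riesz theorem, by a positive measure $\mu_\theta$ with $\mu_\theta(S^1)=a$ — and similarly obtaining $\mu_\psi$ with $\mu_\psi(S^1)=b$ — comparison of the coefficients of $z$ yields $\int_{S^1}z\,d\mu_\theta+\int_{S^1}z\,d\mu_\psi=\alpha\lambda$, with $\bigl|\int_{S^1}z\,d\mu_\theta\bigr|\leq a$ and $\bigl|\int_{S^1}z\,d\mu_\psi\bigr|\leq b$. Since $a+b=|\alpha\lambda|$, equality holds throughout the triangle inequality, which (because $|z|\equiv1$ on $S^1$ and every point of $S^1$ is an extreme point of the closed unit disk) forces $\mu_\theta=a\,\delta_\lambda$ and $\mu_\psi=b\,\delta_\lambda$. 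Consequently $\theta=\tfrac a\alpha\,\phi_{\lambda,\alpha Q}$ and $\psi=\tfrac b\alpha\,\phi_{\lambda,\alpha Q}$, so $\phi_{\lambda,\alpha Q}$ is pure.

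I expect the necessity direction, especially the third reduction, to be the main obstacle: the crucial point is that purity is strong enough to compress the representing positive operator-valued measure $E$ all the way down to a single point mass scaled by a single rank-one operator, and the contradiction there is extracted by testing against the degree-one coordinate function $z$ — which is exactly why the hypothesis $n\geq2$ is needed — together with the equality case of the triangle inequality on the circle. A secondary point requiring care is the bookkeeping that transports the normal form for pure completely positive maps back, through Lemma~\ref{pure1} and Theorem~\ref{toeplitz duality}, to the asserted form $T=T_n(\lambda^{-1})\otimes\alpha Q$.
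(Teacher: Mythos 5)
Your proof is correct, and while it shares the paper's skeleton---reduce via Lemma \ref{pure1} and Theorem \ref{toeplitz duality} to purity of $\hat T$ in $\mathcal C\mathcal P(C(S^1)_{(n)},\B(\H))$, then pass to $C(S^1)$ by Arveson extension---the core of your argument is genuinely different. The paper invokes Arveson's \emph{pure} extension theorem together with the fact that pure completely positive maps of a C$^*$-algebra into $\B(\H)$ dilate to irreducible representations, so that everything reduces to the observation that irreducible representations of $C(S^1)$ are point evaluations. You use only the ordinary extension theorem, represent the extension by a positive operator-valued measure $E$, and then extract the normal form by three elementary extremality arguments: localizing $E$ over Borel sets, extremality of $\phi(1)$ in $\B(\H)_+$, and collapsing $\nu$ to a point mass by testing against $\chi_1=z$ (which is precisely where $n\geq 2$ enters; note that your first and third steps could be merged by testing the decomposition $E=E|_\omega+E|_{S^1\setminus\omega}$ against both $1$ and $z$ simultaneously). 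Your sufficiency argument, via the equality case of the triangle inequality for the first moment of the representing measure, is likewise self-contained, and it is in fact more careful than the paper's at one point: the paper deduces purity of $\hat T$ on $C(S^1)_{(n)}$ from purity of its extension on $C(S^1)$, and that implication requires justification (a decomposition on the subsystem extends to \emph{some} completely positive decomposition on $C(S^1)$, but not obviously to a decomposition of the \emph{given} extension); your moment argument shows directly that any completely positive summand of $\phi_{\lambda,\alpha Q}$ on $C(S^1)_{(n)}$ has representing measure a multiple of $\delta_\lambda$, which closes exactly this point. The price of your route is length; what it buys is independence from the pure extension theorem and from the dilation-theoretic characterization of pure maps, at the cost of some routine measure-theoretic bookkeeping (countable additivity and regularity of $\nu$) that you correctly leave implicit.
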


\begin{proof} Suppose that $T=[\tau_{k-j}]_{k,j=1}^n\in \left(C(S^1)^{(n)}\omin\B(\H)\right)_+$ is pure, 
and express $T$ in its canonical tensor representation with respect
to the canonical linear basis of $C(S^1)^{(n)}$:
\[
T=\sum_{\ell=-n+1}^{n-1} r_\ell\otimes\tau_\ell.
\]
By the Duality Theorem, Theorem \ref{toeplitz duality}, each matrix $r_\ell$ induces a linear functional $\varphi_{r_\ell}$
on 
$C(S^1)_{(n)}$ via the action
\[
\varphi_{r_\ell}(f)=\hat f(-\ell),
\]
for each $f\in C(S^1)_{(n)}$.
The positivity condition \eqref{e:duality positivity gen} and  
Proposition \ref{pure1}
indicate that $\hat T:C(S^1)_{(n)}\rightarrow\B(\H)$
is a pure completely positive linear map, where the action of $\hat T$ on $C(S^1)_{(n)}$ is given by
\[
\hat T(f) = \sum_{\ell=-n+1}^{n-1}\varphi_{r_\ell}(f)  \tau_\ell = \sum_{\ell=-n+1}^{n-1} \hat f(-\ell)\tau_\ell.
\]
The operator system $C(S^1)_{(n)}$ is an operator subsystem of the unital C$^*$-algebra $C(S^1)$, and so
the pure completely positive linear map $\hat T$ has an extension to a pure completely 
positive linear map on $C(S^1)$ \cite[p.~180]{arveson1969}, which we denote again by $\hat T$.

The irreducible representations of the commutative C$^*$-algebra $C(S^1)$ are point evaluations. 
Because pure completely positive maps of C$^*$-algebras into $\B(\H)$ have Stinespring dilations \cite{stinespring1955}
in which the dilating 
homomorphisms are irreducible representations \cite[Corollary 1.4.3]{arveson1969}, there is an 
element $\lambda\in S^1$ and a linear map $w:\H\rightarrow\mathbb C$ such that
\[
\hat T(g) = w^*g(\lambda)w = g(\lambda) w^*w, \mbox{ for every }g\in C(S^1).
\]
Because $w^*w$ is of rank-1, there exists a scalar $\alpha\geq 0$ and a rank-1 projection $Q\in\B(\H)$ 
such that $w^*w=\alpha Q$. Hence, 
\[
\hat T(g) = \alpha g(\lambda) Q, \mbox{ for every }g\in C(S^1).
\]
If $k\in\{-n+1,\dots,-1,0,1,\dots,n-1\}$, then the function $z\mapsto z^k$ is an element of $C(S^1)_{(n)}$ and
\[
\hat T(z^k) = \alpha \lambda^k Q.
\]
Thus,
\[
\alpha \lambda^k Q = \hat T(z^k) = \sum_{\ell=-n+1}^{n-1}\varphi_{r_\ell}(z^k)  \tau_\ell =\tau_{-k}.
\]
As the equation above holds for every choice of $k$, 
we deduce $\tau_\ell=\lambda^{-\ell}(\alpha Q)$ for every $\ell$, which implies that
\[
T=\sum_{\ell=-n+1}^{n-1} r_\ell\otimes\tau_\ell = \sum_{\ell=-n+1}^{n-1} \lambda^{-\ell}r_\ell\otimes(\alpha Q) 
%=\left( \sum_{\ell=-n+1}^{n-1} \lambda^{-\ell}r_\ell\right)\otimes(\alpha Q)
= T_n(\lambda^{-1})\otimes (\alpha Q).
\]
Hence, the pure element $T$ has the form $T_n(\lambda^{-1})\otimes (\alpha Q)$, 
for some $\lambda\in S^1$, $\alpha\geq0$, and rank-1 
projection $Q$.

Conversely, suppose that $T=T_n(\lambda^{-1})\otimes (\alpha Q)$, for some $\lambda\in S^1$, $\alpha\geq0$, and rank-1 
projection $Q\in\B(\H)$. Then $\hat T_n:C(S^1)^{(n)}\rightarrow \B(\H)$ is given by $\hat T_n(f)=\alpha f(\lambda)Q$. 
Considered as a map on
$C(S^1)$, $\hat T_n$ has the form $\hat T_n(g)=w^*g(\lambda) w$, 
where $w:\H\rightarrow \mathbb C$ is a linear map such that $w^*w=\alpha Q$.
As such a map is pure in the cone $\mathcal C\mathcal P(C(S^1), \B(\H))$ \cite[Corollary 1.4.3]{arveson1969}, 
it is also pure in the cone
$\mathcal C\mathcal P(C(S^1)_{(n)}, \B(\H))$. 
Hence, as the purity of $\hat T$ implies the purity of $T$, this completes the proof.
\end{proof}

\begin{corollary}[Separation]\label{sep thm}
Every element of $\left(C(S^1)^{(n)}\omin\M_p(\mathbb C)\right)_+$ is separable.
That is, a matrix $T\in C(S^1)^{(n)}\omin\M_p(\mathbb C)$ is positive if and only if
there are $\lambda_1,\dots, \lambda_m\in S^1$ and positive $b_1,\dots,b_m\in \M_p(\mathbb C)$
such that
\[
T= \sum_{j=1}^m T_n(\lambda_j)\otimes b_j.
\]
\end{corollary}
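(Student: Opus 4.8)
The plan is to deduce the corollary directly from the structure theorem, Theorem \ref{pure thm}, via the general principle that pure elements generate a finite-dimensional proper cone through finite sums. First I would observe that $C(S^1)^{(n)}\omin\M_p(\mathbb C)$ is a finite-dimensional operator system, and hence its positive cone $\left(C(S^1)^{(n)}\omin\M_p(\mathbb C)\right)_+$ is a proper (closed, full, pointed) convex cone in a finite-dimensional real vector space. By Carathéodory-type / Minkowski--Krein--Milman reasoning for such cones, every element is a finite sum of generators of extremal rays, i.e.\ a finite sum of pure elements in the sense of the Definition. Pointedness here follows because a positive Toeplitz matrix that is also negative must be zero; fullness follows since the cone contains the order unit $r_0\otimes I_p$ together with a neighbourhood of it.

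Next I would invoke Theorem \ref{pure thm} with $\H=\mathbb C^p$, so that $\B(\H)=\M_p(\mathbb C)$: every pure element of the cone has the form $T_n(\lambda^{-1})\otimes(\alpha Q)$ with $\lambda\in S^1$, $\alpha\geq0$, and $Q$ a rank-one projection in $\M_p(\mathbb C)$. Writing $b=\alpha Q$, each such pure element is of the form $T_n(\mu)\otimes b$ for some $\mu\in S^1$ (namely $\mu=\lambda^{-1}$) and some positive $b\in\M_p(\mathbb C)$ of rank at most one. Therefore a general positive $T$ is a finite sum $T=\sum_{j=1}^m T_n(\lambda_j)\otimes b_j$ with each $b_j\geq0$ (rank-one, though rank-one is not needed for the statement), which is exactly the asserted separable form. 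For the converse direction of the ``if and only if'', each summand $T_n(\lambda_j)\otimes b_j$ is a tensor product of the positive matrix $T_n(\lambda_j)=\gamma_n(\lambda_j)\gamma_n(\lambda_j)^*$ with a positive $b_j$, hence positive in $C(S^1)^{(n)}\omin\M_p(\mathbb C)$ (indeed in $\omax$, but $\omin$ suffices), and a sum of positive elements is positive; so every matrix of the displayed form is positive, giving separability of every positive element and Theorem \ref{main result} as a special case.

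The only genuine point requiring care — and the place I would spend a sentence or two — is the passage ``pure elements generate the cone through finite sums.'' This is standard for proper convex cones in finite dimensions, but one should either cite it (e.g.\ via the fact that a compact base of the cone is the closed convex hull of its extreme points, together with Carathéodory's theorem to get a \emph{finite} sum) or give the one-line argument: intersect the cone with a supporting hyperplane through the order unit's dual functional to obtain a compact convex base $B$; by Minkowski--Carathéodory every point of $B$ is a convex combination of at most $\dim+1$ extreme points of $B$; extreme points of $B$ correspond to generators of extremal rays of the cone, i.e.\ to pure elements; scaling back recovers $T$ as a finite sum of pure elements. I expect no real obstacle beyond making this reduction explicit and noting that $C(S^1)^{(n)}$ being finite-dimensional is what legitimises it; the heavy lifting has already been done in Theorem \ref{pure thm}.

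One small caveat to flag: Theorem \ref{pure thm} classifies pure elements, and the reduction above shows positivity is equivalent to being a \emph{finite sum} of such pure elements, hence to separability in the sense defined in the introduction. I would also remark that the same argument, combined with the companion structure theorem for $C(S^1)_{(n)}\omin\B(\H)$ to be proved in Section \S4, will \emph{fail} to yield separability there precisely because the pure elements of that dual cone are not all simple tensors — this contrast being the reason the universal Toeplitz matrix $T_n$ is entangled in $C(S^1)_{(n)}\omin\M_n(\mathbb C)$.
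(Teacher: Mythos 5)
Your proposal is correct and follows essentially the same route as the paper: finite-dimensionality gives a closed pointed cone, every element decomposes as a finite sum of generators of extremal rays (the paper simply cites Rockafellar, Theorem 18.5, where you supply the compact-base/Minkowski--Carath\'eodory argument), and Theorem \ref{pure thm} identifies each such generator as $T_n(\lambda^{-1})\otimes(\alpha Q)$, yielding separability. The extra verifications you include (pointedness, the trivial converse direction) are fine but not needed beyond what the paper records.
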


\begin{proof} The operator system $C(S^1)^{(n)}$ and the Hilbert space
$\H=\mathbb C^p$ have finite dimension, and so the convex cone
$\left(C(S^1)^{(n)}\omin\M_p(\mathbb C)\right)_+$ is closed. Hence,
by \cite[Theorem 18.5]{Rockafellar-book},
each element $T$ of $\left(C(S^1)^{(n)}\omin\M_p(\mathbb C)\right)_+$ has the form
\[
T=\sum_{j=1}^m T_j,
\]
for some pure elements $T_j$ of 
$\left(C(S^1)^{(n)}\omin\M_p(\mathbb C)\right)_+$. Because each $T_j$ is separable, by Theorem \ref{pure thm}, so is $T$.
\end{proof}

A positive element $x$ in an operator system $\osr $ is \emph{strictly positive} if there exists a real number $\delta>0$ such that
$x-\delta e_\osr\in\osr_+$. With the aid of the separation theorem (Corollary \ref{sep thm}),  we 
arrive at an interesting result below (Corollary \ref{c1}): strictly positive $2\times 2$ Toeplitz matrices whose entries are $2\times 2$
Toeplitz matrices are separable in $\left(C(S^1)^{(2)}\omin C(S^1)^{(2)}\right)_+$. 
First, though, we note that strictly max-positive Toeplitz matrices with Toeplitz
entries are separable.

\begin{proposition}\label{l1} If $x$ is strictly positive in $C(S^1)^{(n)}\omax C(S^1)^{(m)}$, then $x$ is separable.
\end{proposition}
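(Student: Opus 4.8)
The plan is to leverage the definition of the maximal tensor product directly, together with the nuclearity of $C(S^1)^{(n)}$ which follows from the Toeplitz Duality Theorem (Theorem~\ref{toeplitz duality}): since the dual $C(S^1)_{(n)}\subseteq C(S^1)$ is an operator subsystem of a commutative (hence nuclear) $C^*$-algebra, and $C(S^1)^{(n)}$ is the dual of a finite-dimensional operator system, I expect to be able to invoke known facts relating duality and nuclearity so that $\omin$ and $\omax$ coincide on $C(S^1)^{(n)}\otimes C(S^1)^{(m)}$. If that is available, the proposition reduces to the separability statement for $\omin$, but that is exactly the content of Corollary~\ref{sep thm} only in the case $m=p$, i.e.\ $C(S^1)^{(m)}=\M_m(\mathbb C)$ is \emph{not} the second factor; so I cannot simply quote the corollary, and the max-positivity hypothesis has to do real work.

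So instead I would argue directly from the definition of $\omax$. Suppose $x$ is strictly positive in $C(S^1)^{(n)}\omax C(S^1)^{(m)}$, say $x - \delta(e\otimes e)$ is max-positive for some $\delta>0$. Writing $x = (x-\delta(e\otimes e)) + \delta(e\otimes e)$ and noting $e\otimes e = r_0\otimes r_0$ is trivially separable, it suffices to show the strictly positive part is separable; so after rescaling I may as well assume $x$ itself lies in the interior of the max cone. By the definition of $\omax$, for a suitable $\varepsilon>0$ there exist $N,q\in\mathbb N$, positive $a\in\M_N(C(S^1)^{(n)})$, positive $b\in\M_q(C(S^1)^{(m)})$, and a linear map $\delta_0:\mathbb C\to\mathbb C^N\otimes\mathbb C^q$ with $x = \delta_0^*(a\otimes b)\delta_0$ (the $\varepsilon$-slack can be absorbed since we are in the interior). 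Now the key point: a positive element $a\in\M_N(C(S^1)^{(n)})_+ = (C(S^1)^{(n)}\omin\M_N(\mathbb C))_+$ is, by Corollary~\ref{sep thm}, separable: $a = \sum_i T_n(\lambda_i)\otimes c_i$ with $c_i\in(\M_N(\mathbb C))_+$; symmetrically $b = \sum_j T_m(\mu_j)\otimes d_j$ with $d_j\in(\M_q(\mathbb C))_+$. Substituting and using that $\delta_0^*((T_n(\lambda_i)\otimes T_m(\mu_j))\otimes(c_i\otimes d_j))\delta_0$ is, after identifying the scalar ampliations, a positive scalar multiple of $T_n(\lambda_i)\otimes T_m(\mu_j)$, we obtain $x = \sum_{i,j}\alpha_{ij}\,T_n(\lambda_i)\otimes T_m(\mu_j)$ with $\alpha_{ij}\geq0$; since each $T_n(\lambda_i)$ and $T_m(\mu_j)$ is positive, this exhibits $x$ as separable.

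The step I expect to be the main obstacle is the bookkeeping in ``substituting and using that $\delta_0^*(\cdots)\delta_0$ is a positive multiple of $T_n(\lambda_i)\otimes T_m(\mu_j)$'': one must track how the separable decompositions of $a$ and $b$ interact with the compression by $\delta_0$ and with the matrix amplifications $\M_N$, $\M_q$, and verify that no entangled terms are created in the process — concretely, that $\delta_0^*\big((T_n(\lambda_i)\otimes c_i)\otimes(T_m(\mu_j)\otimes d_j)\big)\delta_0$ really does collapse to $\langle (c_i\otimes d_j)v, v\rangle\, T_n(\lambda_i)\otimes T_m(\mu_j)$ for the appropriate vector $v$. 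Handling the $\varepsilon$-slack cleanly (either by a limiting argument using closedness of the relevant cone, or by absorbing it thanks to strict positivity) is a secondary technical point. An alternative, possibly cleaner route is to first establish that $C(S^1)^{(n)}$ is nuclear as a consequence of Toeplitz Duality and the nuclearity of $C(S^1)$, thereby getting $C(S^1)^{(n)}\omax C(S^1)^{(m)} = C(S^1)^{(n)}\omin C(S^1)^{(m)}$, and only then run a separability argument on the min side; I would present whichever of these the surrounding development makes most economical.
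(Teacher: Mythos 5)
Your argument is essentially the paper's proof: both extract an exact (slack-free) representation of the strictly max-positive element from the definition of $\omax$ (the paper quotes this as \cite[Lemma 2.7]{farenick--kavruk--paulsen--todorov2014} in the form $x=\sum_{i,j}s_{ij}\otimes t_{ij}$ with $[s_{ij}]$ and $[t_{ij}]$ positive), then apply the Gurvits separation theorem to the positive matrices in $\M_N(C(S^1)^{(n)})_+$ and $\M_q(C(S^1)^{(m)})_+$, and finally observe that the resulting scalar coefficients are nonnegative (you via $\langle (c_i\otimes d_j)v,v\rangle\geq 0$, the paper via a positive functional applied to the Schur--Hadamard product --- the same quantity). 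The opening nuclearity detour is correctly discarded by you and does not affect the proof, which is sound.
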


\begin{proof} There exist $N\in\mathbb N$ and positive matrices
$s=[s_{ij}]_{i,j=1}^N\in \M_N(C(S^1)^{(n)})$ and $t=[t_{ij}]_{i,j=1}^N\in \M_m(C(S^1)^{(m)})$
such that $x=\displaystyle\sum_{i,j=1}^N s_{ij}\otimes t_{ij}$ 
\cite[Lemma 2.7]{farenick--kavruk--paulsen--todorov2014}. The matrix $s$ can be expressed as
$s=\displaystyle\sum_{i,j=1}^N s_{ij}\otimes e_{ij}$, and each matrix $s_{ij}$ is given by 
$s_{ij}=\displaystyle\sum_{\ell=-n+1}^{n-1}\zeta^{(ij)}_\ell r_\ell$. Thus, 
$s=\displaystyle\sum_{\ell=-n+1}^{n-1} r_\ell\otimes s_\ell$, where 
$s_\ell=\displaystyle\sum_{i,j=1}^N\zeta^{(ij)}_\ell e_{ij}$.
By Theorem \ref{main result}, $s$ is separable in 
$\M_N(C(S^1)^{(n)})
= C(S^1)^{(n)}\omin \M_N(\mathbb C)$; hence, 
$s=\displaystyle\sum_{k=1}^p a_k\otimes b_k$, for some positive $a_k\in C(S^1)^{(n)}$ and positive $b_k=[\beta^{(k)}_{ij}]_{i,j=1}^N\in\M_N(\mathbb C)$.
Likewise, $t=\displaystyle\sum_{f=1}^q c_f\otimes d_j$, for some $c_f\in (C(S^1)^{(m)})_+$ and $d_f=[\delta^{(f)}_{ij}]_{i,j=1}^N\in\M_N(\mathbb C)_+$.
Thus,
\[
x=\displaystyle\sum_{i,j=1}^N s_{ij}\otimes t_{ij} 
%= \displaystyle\sum_{i,j=1}^N \left(\displaystyle\sum_{k=1}^p \beta_{ij}^{(k)} a_k \right) \otimes \left(\displaystyle\sum_{f=1}^q \delta_{ij}^{(f)} c_f \right)  
= \displaystyle\sum_{k=1}^p\displaystyle\sum_{f=1}^q\left[\left(\displaystyle\sum_{i,j=1}^N \beta_{ij}^{(k)}\delta_{ij}^{(f)} \right)a_k \right]\otimes c_f.
\]
For a given $k$ and $f$, the scalar $\displaystyle\sum_{i,j=1}^N \beta_{ij}^{(k)}\delta_{ij}^{(f)}$ is obtained by applying a positive linear functional on $\M_N(\mathbb C)$
to the Schur-Hadamard product $b_k\circ d_f$ of $b_k$ and $d_f$. Because $b_k\circ d_f$ is positive, this numerical value is a nonnegative real number. Hence, the equation above
represents $x$ as a sum of elementary tensors in which 
the tensor factors are positive Toeplitz matrices. 
\end{proof} 

\begin{corollary}\label{c1} Every strictly positive matrix in $C(S^1)^{(2)}\omin C(S^1)^{(2)}$ is separable.
\end{corollary}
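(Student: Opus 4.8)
The plan is to deduce Corollary \ref{c1} from Proposition \ref{l1} by a limiting argument, exploiting the fact that on a finite-dimensional operator system the $\omin$ and $\omax$ structures agree up to closure in a sense that is strong enough here: strict positivity forces the element into the interior of the max-positive cone. First I would recall that $C(S^1)^{(2)}$ is a two-idimensional operator system; in fact $C(S^1)^{(2)}$ is the operator system spanned by $\{r_{-1},r_0,r_1\}$ inside $\M_2(\mathbb C)$, and one knows (from the Toeplitz duality picture, or directly) that it is small enough that $C(S^1)^{(2)}\omax C(S^1)^{(2)}$ and $C(S^1)^{(2)}\omin C(S^1)^{(2)}$ have the same underlying vector space with $(\,\cdot\,)_+^{\max}\subseteq(\,\cdot\,)_+^{\min}$, both cones being closed since everything is finite-dimensional.

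The key step is the following interior argument. Suppose $x$ is strictly positive in $C(S^1)^{(2)}\omin C(S^1)^{(2)}$, so there is $\delta>0$ with $x-\delta(e\otimes e)\in\left(C(S^1)^{(2)}\omin C(S^1)^{(2)}\right)_+$, where $e=r_0$ is the order unit. The point is that $e\otimes e$ is an order unit for the \emph{max} tensor product as well, hence lies in the interior of $\left(C(S^1)^{(2)}\omax C(S^1)^{(2)}\right)_+$; so a sufficiently small perturbation of $e\otimes e$ stays max-positive. Concretely I would argue that for small enough $\varepsilon>0$ the element $x-\varepsilon(e\otimes e)$ is still min-positive (by strict positivity, taking $\varepsilon<\delta$), and then show $x-\varepsilon(e\otimes e)+\varepsilon(e\otimes e)=x$ can be rewritten so as to land in the max-cone: more carefully, since $e\otimes e$ is an interior point of the max-positive cone, there is a norm-neighbourhood $U$ of $e\otimes e$ contained in $\left(\,\cdot\,\right)_+^{\max}$; writing $x=(x-\delta\,e\otimes e)+\delta\,e\otimes e$ and noting $x-\delta\,e\otimes e$ is min-positive of some norm $M$, one has $x = \big(x-\delta\,e\otimes e\big)+\delta\,e\otimes e$, and for the min-positive summand $y:=x-\delta e\otimes e$ we can, for $t>0$ small, write $t\,e\otimes e + y$... — the cleanest route is: by finite-dimensionality pick a basis of min-positive elements, perturb each slightly into the max-cone (possible because each sits under a multiple of the interior point $e\otimes e$), and conclude $x$, being a positive combination with a strictly positive "gap," is itself a positive combination of max-positive elements, i.e. max-positive. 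Then Proposition \ref{l1} applies verbatim to give separability.

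So the proof structure I would write is: (1) note both tensor products are finite-dimensional with closed positive cones and $(\,\cdot\,)_+^{\max}\subseteq(\,\cdot\,)_+^{\min}$; (2) observe $e\otimes e$ is an Archimedean order unit for $\omax$, hence an interior point of $\left(C(S^1)^{(2)}\omax C(S^1)^{(2)}\right)_+$; (3) use strict positivity of $x$ in the min-ordering to slide $x$ into that interior, concluding $x\in\left(C(S^1)^{(2)}\omax C(S^1)^{(2)}\right)_+$ and in fact that $x$ is strictly max-positive; (4) invoke Proposition \ref{l1}.

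The main obstacle I expect is step (3): making rigorous the passage from "strictly min-positive" to "max-positive." It is not true for general operator systems that strict min-positivity implies max-positivity, so the argument must genuinely use something special about $C(S^1)^{(2)}$ — presumably that $C(S^1)^{(2)}$ is a sufficiently low-dimensional or sufficiently "nuclear-like" operator system that the min and max cones in $C(S^1)^{(2)}\otimes C(S^1)^{(2)}$ have the \emph{same interior}, equivalently that every order unit of $\omin$ is an order unit of $\omax$ here. Establishing that equality of interiors (or directly that $e\otimes e + (\text{small min-positive}) $ is max-positive) is the crux; once it is in hand, everything else is bookkeeping and a citation of Proposition \ref{l1}. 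If that equality-of-interiors fact is not readily available, the fallback is to verify by hand, using the explicit $3$-parameter description of positive elements of $C(S^1)^{(2)}\otimes C(S^1)^{(2)}$ together with Corollary \ref{sep thm} applied in both tensor slots, that a strictly positive such element admits the factorisation $\delta^*(a\otimes b)\delta$ required by the definition of $\omax$.
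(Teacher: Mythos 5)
Your step (3) is where the entire content of the corollary lives, and the interior argument you sketch for it does not work. The obstruction is exactly the one you yourself flag: whenever the max cone is a \emph{proper} closed subcone of the min cone in a finite-dimensional tensor product, there exist strictly min-positive elements that are not max-positive (take any $y$ in the min cone but outside the closed max cone; then $y+\varepsilon(e\otimes e)$ is strictly min-positive for all $\varepsilon>0$, yet remains outside the max cone for small $\varepsilon$ because the complement of a closed cone is open). So the fact that $e\otimes e$ is an interior point of the max cone, and that $x$ dominates $\delta(e\otimes e)$ in the \emph{min} ordering, gives you nothing: the summand $x-\delta(e\otimes e)$ is only known to be min-positive, and adding an interior point of the smaller cone to an element of the larger cone does not land you back in the smaller cone. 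Your proposed "perturb a basis of min-positive elements into the max cone" variant fails for the same reason. Any correct proof must supply a genuinely non-trivial input special to $C(S^1)^{(2)}$, namely that the min and max cones of $C(S^1)^{(2)}\otimes C(S^1)^{(2)}$ actually coincide on strictly positive elements; you correctly identify this as the crux but do not establish it, and your fallback (an explicit hand computation) is not carried out.

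For comparison, the paper obtains that input by duality: since $\left(\osr\omin\oss\right)^d=\osr^d\omax\oss^d$ for finite-dimensional operator systems \cite{farenick--paulsen2012}, the statement "strictly min-positive implies strictly max-positive in $C(S^1)^{(2)}\otimes C(S^1)^{(2)}$" is the dual form of a tensor-product identity for the three-dimensional operator system $C(S^1)_{(2)}=\mathrm{span}\{\bar z,1,z\}\subset C(S^1)$, which is precisely \cite[Theorem 4.7]{farenick--kavruk--paulsen--todorov2014}. That cited theorem is the substantive fact; once it is in hand, Proposition \ref{l1} finishes the argument exactly as you intend. (Minor point: $C(S^1)^{(2)}$ is three-dimensional, not two-dimensional, as your own description via $\{r_{-1},r_0,r_1\}$ shows.)
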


\begin{proof} Because $\left(\osr\omin\oss\right)^d=\osr^d\omax\oss^d$ for any
finite-dimensional operator systems $\osr$ and $\oss$
\cite[Proposition 1.16]{farenick--paulsen2012},
the dual form of \cite[Theorem 4.7]{farenick--kavruk--paulsen--todorov2014} asserts that
every strictly positive element of $C(S^1)^{(2)}\omin C(S^1)^{(2)}$ is strictly positive in 
$C(S^1)^{(2)}\omax C(S^1)^{(2)}$. Hence, by Lemma \ref{l1}, every 
strictly positive matrix in $C(S^1)^{(2)}\omin C(S^1)^{(2)}$ is separable.
\end{proof}

%%%%%%%%%%%%%%%%%%%%%%%%%%%%%%%%%%%%%%%%%%%%%
\section{Entanglement}

We now consider the situation whereby the operator system $C(S^1)^{(n)}$ is
replaced by its dual $C(S^1)_{(n)}$ when tensoring with $\B(\H)$.

It is sometimes convenient to write the canonical linear basis of $C(S^1)_{(n)}$ as
a set of functions
$\chi_k:S^1\rightarrow\mathbb C$ defined by $\chi_k(z)=z^k$, for $k=-n+1,\dots,n-1$, where
$\chi_0$ is the Archimedean order unit for $C(S^1)_{(n)}$. Thus, an arbitrary element 
$F\in C(S^1)_{(n)}\omin\oss$ has the form
\begin{equation}\label{e:tensor}
F=\sum_{\ell=-n+1}^{n-1}\chi_\ell\otimes \tau_\ell,
\end{equation}
for some $\tau_\ell\in\oss$. Alternatively, the operator system $C(S^1)_{(n)}\omin\oss$ 
coincides with the operator system of all continuous functions
$F:S^1\rightarrow\oss$ of the form
\begin{equation}\label{e:function}
F(z)=\sum_{\ell=-n+1}^{n-1}z^\ell  \tau_\ell, \mbox{ for }z\in S^1,
\end{equation}
for some $\tau_\ell\in\oss$.
The latter viewpoint is particularly attractive for $\oss=\M_p(\mathbb C)$, for in such 
cases $F$ arises as a $p\times p$ matrix of trigonometric polynomials.

Although \eqref{e:tensor} and \eqref{e:function} give rise to equivalent forms of positivity, for the clarity of the
proofs below, it is useful to maintain a formal notational distinction between the two.
To this end, recall
that the linear functional $\varphi_{\chi_\ell}$ on $C(S^1)^{(n)}$ induced by $\chi_\ell\in C(S^1)_{(n)}$ has
the property that
\[
\varphi_{\chi_\ell}(r_k)
\;=\; \left\{
       \begin{array}{lcl}
           1   &:\quad&      \mbox{if }k=-\ell  \\
            0          &:\quad&       \mbox{if }k \not=-\ell
      \end{array}
      \right\}.
\]

\begin{lemma}\label{pure2} If $\oss$ is any operator system, then an element
$F\in \left(C(S^1)_{(n)}\omin \mathcal{S}\right)_+$ is pure if and only if $\hat{F}:C(S^1)^{(n)}\rightarrow\oss$ is a
pure completely positive linear map.
\end{lemma}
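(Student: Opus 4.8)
The plan is to mirror the proof of Lemma \ref{pure1}, exploiting the duality $\left(C(S^1)_{(n)}\right)^d = C(S^1)^{(n)}$ from Theorem \ref{toeplitz duality} together with the positivity criterion \eqref{e:duality positivity gen}. Given $F = \sum_{\ell=-n+1}^{n-1}\chi_\ell\otimes\tau_\ell \in \left(C(S^1)_{(n)}\omin\oss\right)_+$, the induced map $\hat F:C(S^1)^{(n)}\rightarrow\oss$ acts by $\hat F(r_k) = \tau_{-k}$ (using the formula $\varphi_{\chi_\ell}(r_k) = \delta_{k,-\ell}$ recalled just above the statement), and by \eqref{e:duality positivity gen} applied to $\osr = C(S^1)_{(n)}$, positivity of $F$ is equivalent to complete positivity of $\hat F$. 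So the content is the equivalence of \emph{purity} on the two sides.

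First I would prove the forward direction: assume $F$ is pure in the cone. Suppose $\hat F = \theta + \psi$ with $\theta,\psi$ completely positive maps $C(S^1)^{(n)}\rightarrow\oss$. Set $t_k = \theta(r_k)$ and $s_k = \psi(r_k)$; since the $r_k$ form a basis of $C(S^1)^{(n)}$ and $\hat F(r_k) = t_k + s_k = \tau_{-k}$, I can reassemble elements $G = \sum_\ell \chi_\ell\otimes t_{-\ell}$ and $H = \sum_\ell\chi_\ell\otimes s_{-\ell}$ of $C(S^1)_{(n)}\otimes\oss$ with $G + H = F$. By construction $\hat G = \theta$ and $\hat H = \psi$ are completely positive, so again by \eqref{e:duality positivity gen} both $G$ and $H$ lie in $\left(C(S^1)_{(n)}\omin\oss\right)_+$. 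Purity of $F$ then forces $G = sF$, $H = (1-s)F$ for some $s\in[0,1]$, whence $\theta = s\hat F$ and $\psi = (1-s)\hat F$; thus $\hat F$ is pure. The converse runs the same decomposition in reverse: starting from $F = G + H$ with $G,H$ in the positive cone, apply the hat map to get $\hat F = \hat G + \hat H$ with $\hat G,\hat H$ completely positive, and use purity of $\hat F$ to conclude.

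There is really no serious obstacle here — the argument is the exact dual of Lemma \ref{pure1}, with the roles of $C(S^1)^{(n)}$ and $C(S^1)_{(n)}$ swapped, and the only thing to be careful about is bookkeeping with the index conventions (the map $\hat F$ sends $r_k$ to $\tau_{-k}$ rather than $\tau_k$, and $\hat F(\chi_\ell)$-type formulas must be tracked consistently with the pairing $\varphi_{\chi_\ell}(r_k)=\delta_{k,-\ell}$). The one structural point worth stating explicitly is that the correspondence $F\leftrightarrow\hat F$ is a linear bijection between $C(S^1)_{(n)}\otimes\oss$ and the space of linear maps $C(S^1)^{(n)}\rightarrow\oss$ which, by \eqref{e:duality positivity gen}, restricts to a bijection between the positive cone and $\mathcal C\mathcal P\left(C(S^1)^{(n)},\oss\right)$; since it is additive and cone-preserving in both directions, it automatically carries pure elements to pure elements and back. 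Given that, one could even phrase the proof in a single sentence, but to match the style of Lemma \ref{pure1} I would write out the decomposition argument as above and then remark that the converse is entirely analogous and hence omitted.
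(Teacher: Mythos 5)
Your proposal is correct and is essentially the argument the paper intends: the published proof of Lemma \ref{pure2} simply states that it is the proof of Lemma \ref{pure1} with $r_\ell$ replaced by $\chi_\ell$, which is exactly the decomposition-and-reassembly argument you have written out, together with the positivity criterion \eqref{e:duality positivity gen} applied on the dual side. Your closing observation that the additive, cone-preserving bijection $F\leftrightarrow\hat F$ automatically matches pure elements is a clean way to see why both directions hold at once, but it does not change the substance of the argument.
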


\begin{proof} The proof is similar to the proof of Lemma \ref{pure1}, where the role of $r_\ell$ in Lemma \ref{pure1}
is played by $\chi_\ell$ here.
\end{proof}

The following result describes the structure of pure elements in the positive cone of 
$ C(S^1)_{(n)}\omin\B(\H)$.
 
\begin{proposition}\label{ent1} Consider
$F\in \left(C(S^1)_{(n)}\otimes\B(\H)\right)_+$ 
in the function form \eqref{e:function}. If $F$
is pure, then 
there exists an operator $w:\H\rightarrow\mathbb C^n$ such that
\[
F(z)=w^*T_n(z^{-1})w,
\]
for every $ z\in S^1$.
\end{proposition}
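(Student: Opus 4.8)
The plan is to run the argument of Theorem \ref{pure thm} on the opposite side of the Toeplitz duality. First I would invoke Lemma \ref{pure2}: the purity of $F$ is equivalent to $\hat F\colon C(S^1)^{(n)}\rightarrow\B(\H)$ being a pure completely positive map. Writing $F$ in its tensor form $F=\sum_{\ell=-n+1}^{n-1}\chi_\ell\otimes\tau_\ell$ and using the pairing between $C(S^1)^{(n)}$ and $C(S^1)_{(n)}$ recorded just before Lemma \ref{pure2}, one checks that $\hat F(r_k)=\tau_{-k}$ for each $k\in\{-n+1,\dots,n-1\}$.

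Next I would exploit that $C(S^1)^{(n)}$ is an operator subsystem of the C$^*$-algebra $\M_n(\mathbb C)$. A pure completely positive map on an operator subsystem extends to a pure completely positive map on the ambient C$^*$-algebra \cite[p.~180]{arveson1969}; let $\Psi\colon\M_n(\mathbb C)\rightarrow\B(\H)$ be such an extension of $\hat F$. Since $\Psi$ is pure, its minimal Stinespring dilation \cite{stinespring1955} has an irreducible dilating representation \cite[Corollary 1.4.3]{arveson1969}, and every irreducible representation of $\M_n(\mathbb C)$ is unitarily equivalent to the identity representation on $\mathbb C^n$. Absorbing the implementing unitary, there is a bounded operator $w\colon\H\rightarrow\mathbb C^n$ with $\Psi(a)=w^*aw$ for all $a\in\M_n(\mathbb C)$; in particular $\hat F(r_k)=w^*r_kw$.

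It then remains to translate this back into function form. Comparing the two expressions for $\hat F(r_k)$ gives $\tau_{-k}=w^*r_kw$, equivalently $\tau_\ell=w^*r_{-\ell}w$, so that, using $T_n(z)=\sum_{m=-n+1}^{n-1}z^m r_m$,
\begin{align*}
F(z) &=\sum_{\ell=-n+1}^{n-1}z^\ell\tau_\ell
     =w^*\Big(\sum_{\ell=-n+1}^{n-1}z^\ell r_{-\ell}\Big)w \\
     &=w^*\Big(\sum_{m=-n+1}^{n-1}z^{-m}r_m\Big)w
     =w^*T_n(z^{-1})w
\end{align*}
for every $z\in S^1$, which is the desired conclusion.

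The step requiring the most care is the middle one: arranging that the extension of $\hat F$ to $\M_n(\mathbb C)$ is genuinely pure and not merely completely positive, and then reading off from Arveson's dilation theory that a pure completely positive map out of $\M_n(\mathbb C)$---even into $\B(\H)$ with $\H$ infinite-dimensional---has the single-term form $a\mapsto w^*aw$. Everything else is the routine bookkeeping identity $\sum_\ell z^\ell r_{-\ell}=T_n(z^{-1})$, which is immediate from the definitions of $r_\ell$ and $T_n$. I would not expect to need a converse direction here, since the proposition asserts only one implication.
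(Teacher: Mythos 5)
Your proposal is correct and follows essentially the same route as the paper: Lemma \ref{pure2} to transfer purity to $\hat F$, Arveson's pure extension theorem to pass to $\M_n(\mathbb C)$, the identification of pure completely positive maps out of $\M_n(\mathbb C)$ with compressions $a\mapsto w^*aw$ via the irreducibility of the identity representation, and the bookkeeping identity $\sum_\ell z^\ell r_{-\ell}=T_n(z^{-1})$. No substantive differences from the paper's argument.
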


\begin{proof}
Suppose
$F=\displaystyle\sum_{\ell=-n+1}^{n-1}\chi_\ell\otimes \tau_\ell$ is positive and pure. Thus, by Lemma \ref{pure2}, the linear map 
$\hat F:C(S^1)^{(n)}\rightarrow \mathcal{S}$ given by
$\hat F=\displaystyle\sum_{\ell=-n+1}^{n-1}\varphi_{\chi_\ell}\tau_\ell$ is completely positive and pure.
By the Arveson pure extension theorem \cite[p.~180]{arveson1969}, $\hat F$ extends to a pure completely positive linear map of $\M_n(\mathbb C)$ into $\B(\H)$,
which we continue to denote by $\hat{F}$. Since all irreducible representations of the C$^*$-algebra $\M_n(\mathbb C)$ are unitarily equivalent to the identity representation, 
the pure completely positive linear map $\hat F$ necessarily has the form
$\hat{F}(x)=w^*xw$, for all $x\in\MnC$, for some linear operator $w:\H\rightarrow\mathbb{C}^n$ \cite[Corollary 1.4.3]{arveson1969}. Thus,
 for each $k\in\{-n+1,\dots,-1,0,1,\dots,n-1\}$,
\[
\hat{F}(r_k)=w^*r_kw=\sum_{\ell=-n+1}^{n-1}\varphi_{\chi_\ell}(r_k)\tau_\ell=\tau_{-k}\,.
\] 
Hence, considering the function form \eqref{e:function} of $F$, we deduce that
\[
F(z)=\sum_{\ell=-n+1}^{n-1}z^\ell (w^*r_{-\ell}w)=w^*\left(\sum_{\ell=-n+1}^{n-1}z^\ell  r_{-\ell}\right)w =  w^*T_n(z^{-1})w,
\]
for all $z\in S^1$.
\end{proof}

Because of finite dimensionality, the cone
$\left(C(S^1)_{(n)}\otimes\M_p(\mathbb C)\right)_+$ closed. Hence, we have the following consequence:

\begin{corollary} A matrix $F\in C(S^1)_{(n)}\omin\M_p(\mathbb C)$ is positive if and only if there
are linear maps $w_j:\mathbb C^p\rightarrow\mathbb C^n$, for $j=1,\dots,m$, such that
\[
F(z)=\sum_{j=1}^m w_j^*T_n(z)w_j,
\]
for $z\in S^1$.
\end{corollary}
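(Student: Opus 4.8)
The plan is to deduce this corollary from Proposition~\ref{ent1} exactly as Corollary~\ref{sep thm} was deduced from Theorem~\ref{pure thm}. First I would invoke finite dimensionality: since $C(S^1)_{(n)}$ and $\mathbb C^p$ are both finite-dimensional, the positive cone $\left(C(S^1)_{(n)}\omin\M_p(\mathbb C)\right)_+$ is a closed proper convex cone in a finite-dimensional real vector space, so by \cite[Theorem 18.5]{Rockafellar-book} every element $F$ of this cone is a finite sum $F=\sum_{j=1}^m F_j$ of pure elements $F_j$ of the cone. By Proposition~\ref{ent1}, each pure $F_j$ has the form $F_j(z)=w_j^*T_n(z^{-1})w_j$ for some linear map $w_j:\mathbb C^p\rightarrow\mathbb C^n$. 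This already gives the representation $F(z)=\sum_{j=1}^m w_j^*T_n(z^{-1})w_j$; to match the statement's $T_n(z)$ rather than $T_n(z^{-1})$, I would use the identity recorded before Section~\S3, namely $T_n(z^{-1})=u_n^*T_n(z)u_n$ for the fixed unitary $u_n\in\M_n(\mathbb C)$, and replace each $w_j$ by $u_nw_j$; this absorbs the inversion without changing the form of the conclusion.

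For the converse direction I would simply observe that for each fixed $j$ the function $z\mapsto w_j^*T_n(z)w_j$ is positive in $C(S^1)_{(n)}\omin\M_p(\mathbb C)$: indeed $T_n$ itself lies in the positive cone of $C(S^1)_{(n)}\omin\M_n(\mathbb C)$ (it is pointwise the rank-one positive matrix $\gamma_n(z)\gamma_n(z)^*$, and more structurally it arises as $\hat{T}$ for a completely positive map into $\M_n(\mathbb C)$), and compression by $w_j^*(\cdot)w_j$ is a completely positive operation that preserves $\omin$-positivity. A finite sum of positive elements is positive, so the right-hand side is always in the cone. Hence the two conditions are equivalent.

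The main — and really only — obstacle is bookkeeping about which way the inversion $z\mapsto z^{-1}$ goes, and making sure the unitary conjugation $T_n(z^{-1})=u_n^*T_n(z)u_n$ is applied on the correct side so that the $w_j$'s can be redefined cleanly; this is routine given the transpose identity already established in the text. There is no analytic difficulty here since closedness of the cone in finite dimensions is immediate, and Proposition~\ref{ent1} does all the structural work. A short proof is therefore:

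\begin{proof}
Since $C(S^1)_{(n)}$ and $\mathbb C^p$ have finite dimension, the cone $\left(C(S^1)_{(n)}\omin\M_p(\mathbb C)\right)_+$ is closed, and so by \cite[Theorem 18.5]{Rockafellar-book} every positive $F$ is a finite sum $F=\sum_{j=1}^m F_j$ of pure elements of this cone. By Proposition \ref{ent1}, each $F_j$ has the form $F_j(z)=v_j^*T_n(z^{-1})v_j$ for some linear map $v_j:\mathbb C^p\rightarrow\mathbb C^n$. Using $T_n(z^{-1})=u_n^*T_n(z)u_n$ and setting $w_j=u_nv_j$, we obtain $F(z)=\sum_{j=1}^m w_j^*T_n(z)w_j$. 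Conversely, for each $j$ the function $z\mapsto w_j^*T_n(z)w_j$ is positive in $C(S^1)_{(n)}\omin\M_p(\mathbb C)$, being a completely positive compression of the positive element $T_n$; hence so is any finite sum of such terms.
\end{proof}
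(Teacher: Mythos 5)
Your proof is correct and follows exactly the route the paper intends: the paper's entire justification is the one-line remark that the cone is closed in finite dimensions, so that Rockafellar's theorem decomposes $F$ into pure elements, which Proposition~\ref{ent1} then describes. Your explicit handling of the $T_n(z^{-1})$ versus $T_n(z)$ discrepancy via $T_n(z^{-1})=u_n^*T_n(z)u_n$ and the pointwise-positivity argument for the converse are both correct and in fact supply details the paper leaves implicit.
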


Our main result in this section is the following one: the universal Toeplitz matrices are entangled.

\begin{theorem}\label{ent T_n} For every $n\geq2$, the universal Toeplitz matrix
$T_n$ is pure and entangled in $\left(C(S^1)_{(n)}\omin\M_n(\mathbb C)\right)_+$.
\end{theorem}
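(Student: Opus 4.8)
The plan is to prove the two assertions—purity and entanglement—separately, using the structural results already in hand.

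\textbf{Purity of $T_n$.} I would show that $\widehat{T_n}: C(S^1)^{(n)} \to \M_n(\mathbb C)$ is a pure completely positive map and then invoke Lemma \ref{pure2}. Recall $T_n(z) = \gamma_n(z)\gamma_n(z)^*$ with $\gamma_n(z) = (1, z, \dots, z^{n-1})^t$, so in the function form \eqref{e:function} we have $T_n(z) = \sum_\ell z^\ell r_{-\ell}$ and the associated map sends $r_k \mapsto r_k$ (the entrywise computation mirrors the one in the proof of Proposition \ref{ent1}). In other words $\widehat{T_n} = \mathrm{id}_{\M_n(\mathbb C)}$ up to the identification $C(S^1)^{(n)} \subseteq \M_n(\mathbb C)$; more precisely $\widehat{T_n}$ is the inclusion map $C(S^1)^{(n)} \hookrightarrow \M_n(\mathbb C)$, which is $x \mapsto x = 1^* x 1$ in Stinespring form. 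By \cite[Corollary 1.4.3]{arveson1969} a cp map of the form $x \mapsto w^* \pi(x) w$ with $\pi$ irreducible and $w$ of rank one is pure; here $\pi = \mathrm{id}$ is irreducible on $\M_n(\mathbb C)$ and $w = 1: \mathbb C^n \to \mathbb C^n$ is the identity, which is not rank one. So I cannot conclude purity of $\widehat{T_n}$ as a map into $\M_n(\mathbb C)$ this way; instead I should work directly with the cone $\left(C(S^1)_{(n)} \omin \M_n(\mathbb C)\right)_+$ and the characterisation of its pure elements. Proposition \ref{ent1} tells us every pure $F$ has the form $w^* T_n(z^{-1}) w$ for some $w: \mathbb C^n \to \mathbb C^n$; conversely, I would argue that $T_n$ itself arises from $w = u_n$ (the flip unitary), since $u_n^* T_n(z^{-1}) u_n = T_n(z)$ by the transpose identity recorded before Section \S3, and that the cp map $x \mapsto u_n^* x u_n$, while given by a unitary rather than a rank-one $w$, is pure in $\mathcal{CP}(C(S^1)^{(n)}, \M_n(\mathbb C))$ precisely because restriction of the irreducible identity representation of $\M_n(\mathbb C)$ to the operator subsystem $C(S^1)^{(n)}$ keeps the map pure (the Arveson dilation of the restricted map is still the identity representation composed with a unitary, hence pure by the same corollary applied with $w$ a unitary and $\pi$ irreducible—Arveson's criterion for purity of $x \mapsto w^*\pi(x)w$ requires $\pi$ irreducible and the closure of $\pi(\mathcal A)w\H$ dense, not $w$ rank one). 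I expect this subtlety—getting the purity argument exactly right for a map coming from a unitary rather than a rank-one contraction—to be the main obstacle, and I would resolve it by appealing to the precise statement of \cite[Corollary 1.4.3]{arveson1969}, namely that $x \mapsto w^*\pi(x)w$ is pure iff $\pi$ is irreducible, or alternatively by directly verifying the defining property of purity: if $T_n = A + B$ with $A, B$ positive in $C(S^1)_{(n)} \omin \M_n(\mathbb C)$, then $u_n A u_n^*$ and $u_n B u_n^*$ are positive Toeplitz matrices summing to $T_n(z)$, and since $T_n(z) = \gamma_n(z)\gamma_n(z)^*$ is rank one for each $z$, pointwise domination forces $A(z) = c(z)T_n(z)$, $B(z) = (1-c(z))T_n(z)$; finally the Toeplitz (i.e. $\chi_\ell$-homogeneity) constraint forces $c$ to be constant.

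\textbf{Entanglement of $T_n$.} I would argue by contradiction: suppose $T_n$ were separable, so $T_n(z) = \sum_{j=1}^m c_j(z) \otimes b_j$ with $c_j \in \left(C(S^1)_{(n)}\right)_+$ and $b_j \in \M_n(\mathbb C)_+$; equivalently $T_n(z) = \sum_j c_j(z) b_j$ with each $c_j: S^1 \to [0,\infty)$ a nonnegative trigonometric polynomial of degree $\le n-1$ and each $b_j \ge 0$ in $\M_n(\mathbb C)$. Evaluating the $(1,1)$-entry gives $1 = \sum_j c_j(z)(b_j)_{11}$ for all $z$; more usefully, since $T_n(z)$ is a rank-one projection (up to the scalar $n$, as $\gamma_n(z)^*\gamma_n(z) = n$) onto $\mathbb C\gamma_n(z)$, at each fixed $z$ every summand $c_j(z) b_j$ must have range inside $\mathbb C\gamma_n(z)$; hence for each $j$ and each $z$ in the (open, dense) set where $c_j(z) > 0$, we get $b_j \gamma_n(z)' \in \mathbb C\gamma_n(z)$ for... this needs care, so instead I would use the operator-system/duality formulation: separability of $T_n$ in $C(S^1)_{(n)} \omin \M_n(\mathbb C)$ is equivalent, via \eqref{e:duality positivity gen} and Toeplitz Duality (Theorem \ref{toeplitz duality}), to $\widehat{T_n}$ being a \emph{separable} (entanglement-breaking / sum of rank-one) completely positive map $C(S^1)^{(n)} \to \M_n(\mathbb C)$—that is, $\widehat{T_n} = \sum_j \omega_j(\cdot) b_j$ for states $\omega_j$ on $C(S^1)^{(n)}$ and positive $b_j$. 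Since $\widehat{T_n}$ is (the restriction to $C(S^1)^{(n)}$ of) the identity map on $\M_n(\mathbb C)$, this would exhibit $\mathrm{id}_{\M_n(\mathbb C)}$ restricted to $C(S^1)^{(n)}$ as entanglement-breaking; but the identity on $\M_n$ is not entanglement-breaking for $n \ge 2$, and I would show the restriction to the Toeplitz operator system still is not. Concretely: if $x \mapsto \sum_j \omega_j(x) b_j$ agrees with $x \mapsto x$ on all of $C(S^1)^{(n)}$, apply it to $x = T_n(\lambda)$ for $\lambda \in S^1$ (which lies in $(C(S^1)^{(n)})_+$ and is rank one) to get $T_n(\lambda) = \sum_j \omega_j(T_n(\lambda)) b_j$; letting $\lambda$ vary, the matrices $\{T_n(\lambda): \lambda \in S^1\}$ span $C(S^1)^{(n)}$ (this is exactly the content of $T_n$ being universal / of the separation Corollary \ref{sep thm}), so the $b_j$ together with the functionals $\omega_j$ reconstruct every Toeplitz matrix as a conical combination of the fixed positive $b_j$'s—but the positive Toeplitz matrices do not lie in the cone generated by finitely many fixed positive matrices unless those matrices are themselves (scalar multiples of) the $T_n(\lambda)$, in which case we would need finitely many points $\lambda$ to span, which fails for $n \ge 2$ since $\{T_n(\lambda)\}$ is a $(2n-1)$-dimensional family of rank-one matrices and finitely many rank-one positive matrices span a cone whose extreme rays are exactly those finitely many rays. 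Therefore $m$ would have to be infinite, contradicting separability. I anticipate the cleanest route is this last dimension-counting / extreme-ray argument: \emph{the extreme rays of $\left(C(S^1)^{(n)}\right)_+$ are exactly the rays through $T_n(\lambda)$, $\lambda \in S^1$ (a consequence of Corollary \ref{sep thm} together with the fact that each $T_n(\lambda)$ is rank one hence extreme), and there are infinitely many of them when $n \ge 2$, so no finite separable representation of the full-rank-generating element $T_n$ can exist.} The main obstacle here will be pinning down precisely why finiteness of the separable decomposition, combined with $\widehat{T_n}$ being the identity on a system whose positive cone has infinitely many extreme rays, yields a contradiction—i.e. making the "identity is not entanglement-breaking, even restricted to $C(S^1)^{(n)}$" step rigorous rather than heuristic; I would do this by evaluating the purported decomposition on the extreme points $T_n(\lambda)$ and noting that an entanglement-breaking map sends extreme rays into the cone generated by the finitely many $b_j$, so it cannot be surjective onto the infinitely-many-extreme-ray cone $\left(C(S^1)^{(n)}\right)_+$, whereas the identity is.
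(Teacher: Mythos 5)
Your proposal reaches both conclusions by routes genuinely different from the paper's, and most of it can be made to work, but one step in each half needs repair. For \emph{purity}, the paper shows that $\hat T_n(r_k)=r_{-k}=u_n^*r_ku_n$ is the restriction to $C(S^1)^{(n)}$ of the irreducible representation $x\mapsto u_n^*xu_n$, and then invokes the boundary-representation machinery of \cite{arveson1972} together with \cite[Proposition 2.12]{farenick--tessier2022} to conclude that the \emph{restriction} to the operator subsystem is still pure. You correctly sense the danger in your first route: \cite[Corollary 1.4.3]{arveson1969} gives purity of $x\mapsto w^*\pi(x)w$ as a map on the C$^*$-algebra $\M_n(\mathbb C)$, but purity does not automatically pass to the restriction of a cp map to an operator subsystem --- that is precisely the gap the paper closes with boundary representations, and your assertion that ``the Arveson dilation of the restricted map is still the identity representation'' does not prove it. Your fallback direct argument, however, is sound and more elementary than the paper's: if $T_n=A+B$ in the cone, then pointwise $0\le A(z)\le T_n(z)=\gamma_n(z)\gamma_n(z)^*$, which has rank one, so $A(z)=c(z)T_n(z)$ with $c(z)=A(z)_{11}$ a trigonometric polynomial of degree at most $n-1$; since $A(z)_{n,1}=c(z)z^{n-1}$ and $A(z)_{1,n}=c(z)z^{-(n-1)}$ must also have Fourier support in $\{-(n-1),\dots,n-1\}$, every nonzero Fourier coefficient of $c$ is forced to vanish and $c$ is constant. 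Write that out and the purity claim is complete with no representation theory at all.

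For \emph{entanglement}, the paper stays in the function form $T_n(z)=\sum_jf_j(z)b_j$, notes the $f_j$ vanish only on a finite set, and uses $\ker T_n(z)\subseteq\ker b_j$ to force the range of each $b_j$ to equal $\mathbb C\gamma_n(z)$ for all $z$ off that finite set, which is impossible. Your dual route is workable but has two defects as written. First, $\hat T_n$ is not the identity restricted to $C(S^1)^{(n)}$ but the transpose $x\mapsto u_n^*xu_n$ (harmless, since this is a complete order automorphism of $C(S^1)^{(n)}$, but it should be said). Second, and more seriously, your pivotal justification --- that a cone with infinitely many extreme rays cannot be contained in a finitely generated cone --- is not valid as stated: extreme rays of a subcone need not be extreme in a containing cone, and, for instance, a circular cone sits comfortably inside finitely generated cones of Hermitian matrices. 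What rescues the argument is exactly the positivity of the $b_j$ and the rank-one domination lemma, i.e.\ the same engine as the paper's proof: from $T_n(\lambda^{-1})=\sum_j\omega_j(T_n(\lambda))\,b_j$ with all terms positive and the left side of rank one, every $b_j$ with $\omega_j(T_n(\lambda))>0$ must be a nonnegative multiple of $T_n(\lambda^{-1})$; at least one such $j$ exists for each $\lambda$, so by pigeonhole a single $b_j$ would be proportional to $T_n(\mu^{-1})$ for two distinct $\mu$, contradicting the pairwise non-proportionality of the vectors $\gamma_n(\mu)$. Insert that lemma explicitly and your duality version of the entanglement proof closes; it is then the paper's rank-one mechanism transported through Theorem \ref{toeplitz duality} rather than a new argument.
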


\begin{proof}
Suppose, on the contrary, that $T_n$ is separable. Thus, using the form \eqref{e:function},
there are nonzero $f_1,\dots, f_m \in \left(C(S^1)_{(n)}\right)_+$ 
and nonzero $b_1, \dots, b_m\in\MnC_+$ such that $T_n(z)=\displaystyle\sum_{j=1}^m f_j(z)b_j$, for all $z\in S^1$. 
Let 
\[
\mathcal{Z}=\lbrace z_0\in S^1 \vert f_j(z_0)=0, \mbox{ for some } j=1, ..., m\rbrace .
\] 
Because $g_j(z)=z^{n-1}f_j(z)$ is a complex polynomial of degree at most $2n-2$, 
there are, for each $j$, at most finitely many $z\in S^1$ for which $f_j(z)=0$. Thus, the set $\mathcal{Z}$ is finite.

Select $z\in S^1\setminus \mathcal{Z}$. Because $T_n(z)$ is rank 1 and positive, 
there exist orthonormal vectors $\xi_1^z, ...,\xi_{n-1}^z \in \mathbb{C}^n$ such that $T_n(z)\xi_k^z=0$ for all $k=1, ..., n-1$. Thus,
\[
0=\langle T_n(z)\xi_k^z, \xi_k^z\rangle = \sum_{j=1}^m f_j(z)\langle b_j\xi_k^z, \xi_k^z\rangle.
\]
Since $f_j(z)>0$, for each $j$, it must be that $\langle b_j\xi_k^z, \xi_k^z\rangle=0$ and, hence, $b_j\xi_k^z=0$ for each $k$ and $j$,
by the positivity of $b_j$. 
Thus, $\ker{T_n(z)} \subseteq \ker{b_j}$ for all $j$, and so $\ran T_n(z)\supseteq\ran b_j$. Therefore, as $b_j\neq 0$ and $T_n(z)$ has rank 1, we have the equality
$\ran T_n(z)=\ran b_j$ for each $j$, and such is true for all $z\in S^1\setminus\mathcal{Z}$. Hence, 
\[
\ran b_j=\ran T_n(z)=
\left\{\delta\begin{array}{c|c}
\begin{bmatrix}
1 \\
z^2 \\
\vdots \\
z^{n-1} \\
\end{bmatrix} &
\delta\in\mathbb{C}
\end{array}\right\},
\]
for all $z\in S^1\setminus\mathcal{Z}$ and all $j$. Because $S^1\setminus\mathcal{Z}$ is infinite, 
there exist $z\in S^1$ such that $z, -z \notin \mathcal{Z}$. With such a complex number $z$,
\[
\begin{bmatrix}
1 \\
z \\
z^2 \\
z^3 \\
\vdots \\
z^{n-1} \\
\end{bmatrix}
\mbox{ and } 
\begin{bmatrix}
1 \\
-z \\
z^2 \\
-z^3 \\
\vdots \\
(-z)^{n-1} \\
\end{bmatrix}
\]
are linearly independent and belong to the range of each $b_j$. Hence, 
$\text{dim} (\ran b_j)\geq 2$, in contradiction to $\text{dim} (\ran b_j)=1$. 
Therefore, it cannot be that $T_n$ is separable.

To show that $T_n$ is pure, we need only show that the completely positive linear map
$\hat T_n:C(S^1)^{(n)}\rightarrow\M_n(\mathbb C)$ is pure (by Lemma \ref{pure2}). For each $k$,
$\hat T_n(r_k)=r_{-k}=u_n^*r_ku_n$, where $u_n\in\M_n(\mathbb C)$ is the unitary matrix that implements
the transpose map on the Toeplitz operator system. Hence, $\hat T_n$ is the restriction 
to $C(S^1)^{(n)}$
of the irreducible representation 
$\pi:\M_n(\mathbb C)\rightarrow\M_n(\mathbb C)$ given by $\pi(x)=u_n^*xu_n$. Because
$C(S^1)^{(n)}$ generates the C$^*$-algebra $\M_n(\mathbb C)$, and because $\pi$ is a boundary
representation for $C(S^1)^{(n)}$ \cite[Theorem 2.1.1 and Remark 2 on p.~288]{arveson1972}, 
the restriction $\hat T_n$ of $\pi$ to $C(S^1)^{(n)}$ is necessarily
pure \cite[Proposition 2.12]{farenick--tessier2022}.
\end{proof}

%Despite $T_2(z)=\displaystyle\begin{bmatrix}
%1 & z^{-1} \\ 
%z & 1 \\
%\end{bmatrix}$ being entangled, scaling the off-diagonal elements leads to a separable matrix:
%\[
%\begin{bmatrix} 1& \frac{1}{2}z^{-1} \\ \frac{1}{2}z & 1 \\ \end{bmatrix}
%=
%f_1(z)\begin{bmatrix}
%1 & 1 \\ 
%1 & 1 \\
%\end{bmatrix} + 
%f_2(z)\begin{bmatrix}
%1 & -1 \\ 
%-1 & 1 \\
%\end{bmatrix},
%\]
%where $f_1$ and $f_2$ are the positive functions given by
%\[
%f_1(z)=1+\frac{1}{2}(z+z^{-1}) \mbox{ and } f_2(z)=1-\frac{1}{2}(z+z^{-1}).
%\]
%%%%%%%%

%%%%%%%%%%%%%%%%%%%%%%%
\section{Complete Positivity}

The innovation introduced by Stinespring in his classic paper \cite{stinespring1955} was to focus on those
positive linear maps $\psi:\A\rightarrow\B(\K)$, where $\A$ is a unital C$^*$-algebra and $\K$ is a Hilbert space, 
for which $\psi^{(n)}=\mbox{\rm id}_{\M_n(\mathbb C)}\otimes\A$ is a positive linear map of
$\M_n(\mathbb C)\omin\A$ into $\M_n(\mathbb C)\omin\B(\K)$ for every $n\in\M_n$. With the analogy
encapsulated by the definition below, the
role of $\M_n(\mathbb C)$ is taken by $C(S^1)^{(n)}$.

\begin{definition} If $\osr$ is an operator system, then a
linear map $\psi:\osr\rightarrow\B(\K)$, for some Hilbert space $\K$, is \emph{Toeplitz completely positive} if, 
for every $n\in\mathbb N$ and matrix
\[
\left[ \begin{array}{cccc} 
\tau_0 & \tau_{-1}   &  \dots&   \tau_{-n+1} \\
\tau_1 & \ddots & \ddots & \vdots \\
\vdots & \ddots & \ddots & \tau_{-1}  \\
\tau_{n-1} &   \dots &  \tau_{1}&\tau_0
\end{array}
\right]
\in \M_n(\osr)_+,
\]
the matrix 
\[
\left[ \begin{array}{cccc} 
\psi(\tau_0) & \psi(\tau_{-1}) &    \dots&  \psi(\tau_{-n+1}) \\
\psi(\tau_1) & \ddots & \ddots& \vdots\\
\vdots & \ddots & \ddots & \psi(\tau_{-1})  \\
 \psi(\tau_{n-1}) & \dots &  \psi(\tau_{1})&\psi(\tau_0)
\end{array}
\right]
\]
is a positive operator on $\displaystyle\bigoplus_1^n\K$.
\end{definition}

Theorem \ref{main result} was used in \cite[Corollary 7.4]{farenick2021} to establish 
Toeplitz completely positivity of positive linear maps on unital 
nuclear C$^*$-algebras,
but this fact is also true for nuclear operator systems using 
the main result of \cite{han--paulsen2011} regarding nuclear maps and arguments 
similar to those used in \cite[Corollary 7.4]{farenick2021}. We omit further discussion of the details
of this straightforward proof.

\begin{theorem}\label{nuclear} 
Every positive linear map on a nuclear operator system
is Toeplitz completely positive.
\end{theorem}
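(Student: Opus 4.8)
The plan is to reduce the claim to the Gurvits separation theorem (Theorem \ref{main result}, equivalently Corollary \ref{sep thm}) by way of the characterisation of nuclearity in terms of completely positive factorisations through matrix algebras due to Han and Paulsen, together with the observation that a positive linear map that factors (up to a suitable approximation) through $\M_k(\mathbb C)$ inherits Toeplitz complete positivity because matrix algebras already satisfy it. Concretely, let $\oss$ be a nuclear operator system and let $\psi:\oss\rightarrow\B(\K)$ be positive. Fix $n\in\mathbb N$ and a positive Toeplitz matrix $T=[\tau_{k-j}]_{k,j=1}^n\in\M_n(\oss)_+=\bigl(C(S^1)^{(n)}\omin\oss\bigr)_+$; we must show $\psi^{(n)}(T)=[\psi(\tau_{k-j})]_{k,j}$ is positive in $\M_n(\B(\K))=\B(\bigoplus_1^n\K)$. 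The first step is to invoke nuclearity of $\oss$: since $\oss$ is nuclear and $C(S^1)^{(n)}$ is an operator system, $C(S^1)^{(n)}\omin\oss=C(S^1)^{(n)}\omax\oss$, so $T$ is in fact \emph{max}-positive.

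Next I would feed this into the factorisation structure of $\omax$-positive elements recorded in the excerpt: for every $\varepsilon>0$ there are $N,q\in\mathbb N$, positive $a\in\M_N(C(S^1)^{(n)})_+$, positive $b\in\M_q(\oss)_+$, and a linear $\delta:\mathbb C^n\rightarrow\mathbb C^N\otimes\mathbb C^q$ with $\varepsilon(e_{C(S^1)^{(n)}}\otimes e_\oss)+T=\delta^*(a\otimes b)\delta$. Here $a$ is a positive $N\times N$ matrix over the Toeplitz operator system, i.e. a positive block-Toeplitz matrix with $C(S^1)^{(n)}$-entries; equivalently $a\in\bigl(C(S^1)^{(n)}\omin\M_N(\mathbb C)\bigr)_+$, to which Gurvits' theorem in the sharp form of Corollary \ref{sep thm} applies, giving $a=\sum_r T_n(\lambda_r)\otimes c_r$ with $\lambda_r\in S^1$ and $c_r\in\M_N(\mathbb C)_+$. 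Applying the map $\mathrm{id}_{\M_n}\otimes\psi$ (more precisely, the appropriate ampliation of $\psi$ on the $\oss$-tensor-leg) to the displayed identity, and using that $\psi$ is positive so that $(\mathrm{id}\otimes\psi)(b)\geq0$, while the scalar/matrix factors $T_n(\lambda_r)\otimes c_r$ are unaffected, we obtain
\[
\psi^{(n)}\bigl(\varepsilon(e\otimes e)+T\bigr)=\delta^*\Bigl(a'\otimes\psi^{(q)}(b)\Bigr)\delta\geq0,
\]
where $a'$ is the image of $a$ and is still a positive Toeplitz-structured matrix, precisely because each $T_n(\lambda_r)$ is a fixed rank-one positive Toeplitz \emph{scalar} matrix and $\psi$ does not act on that leg; the compression by $\delta^*(\cdot)\delta$ preserves positivity. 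Letting $\varepsilon\downarrow0$ and using norm-continuity of $\psi$ (positive linear maps between operator systems are bounded) gives $\psi^{(n)}(T)\geq0$.

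The step I expect to be the genuine obstacle is the bookkeeping in the previous paragraph: one must be careful that the $\omax$-factorisation respects the Toeplitz structure when $\psi^{(n)}$ is applied, i.e. that $\psi^{(n)}(T)$ really is the $n\times n$ Toeplitz matrix $[\psi(\tau_{k-j})]$ and that applying $\psi$ ``entrywise on the $\oss$-leg'' commutes with the $\delta^*(\,\cdot\,)\delta$ compression and the tensor factorisation through $\M_N$. This is exactly the place where the sharpened separation statement (with the $T_n(\lambda_r)$ of rank one) is used, so that the Toeplitz leg is literally a positive scalar Toeplitz matrix and positivity is transported unchanged. As the excerpt itself notes, the argument is the operator-system analogue of \cite[Corollary 7.4]{farenick2021}, substituting the Han--Paulsen criterion for nuclear maps \cite{han--paulsen2011} for the use of nuclear C$^*$-algebras there; the remaining details are routine, so I would state the theorem and point to that parallel rather than writing the full computation.
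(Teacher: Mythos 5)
Your overall strategy (reduce to Gurvits' separation theorem via nuclearity) is the intended one --- the paper itself only sketches the proof, pointing to \cite{han--paulsen2011} and \cite[Corollary 7.4]{farenick2021} --- but the particular route you take through the definition of $\omax$ has a genuine gap. In your factorisation $\varepsilon(e\otimes e)+T=\delta^*(a\otimes b)\delta$, the element $b$ lives in $\M_q(\oss)_+$ with $q>1$ in general, and the step ``$\psi$ is positive, so $(\mathrm{id}\otimes\psi)(b)=\psi^{(q)}(b)\geq0$'' is exactly the assertion that $\psi$ is $q$-positive. That is not among the hypotheses; it is the kind of conclusion the theorem is designed to extract from Toeplitz structure, so assuming it is circular (and it is false for general positive maps, e.g.\ the transpose on $\M_2(\mathbb C)$ is positive but not $2$-positive). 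Note also that you applied Corollary \ref{sep thm} to the factor $a\in\M_N(C(S^1)^{(n)})_+$, which is the tensor leg on which $\psi$ does \emph{not} act; the obstruction sits entirely in the $\M_q(\oss)$ leg, where $b$ carries no Toeplitz structure and the separation theorem gives you nothing.

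The repair is to use the Han--Paulsen characterisation that you mention only in passing: nuclearity of $\oss$ provides completely positive maps $\alpha:\oss\rightarrow\M_k(\mathbb C)$ and $\beta:\M_k(\mathbb C)\rightarrow\oss$ with $\beta\circ\alpha\rightarrow\mathrm{id}_\oss$ in the point-norm topology. Then $(\mathrm{id}\otimes\alpha)(T)$ is positive in $C(S^1)^{(n)}\omin\M_k(\mathbb C)$, hence separable by Corollary \ref{sep thm}, say equal to $\sum_j T_n(\lambda_j)\otimes b_j$ with $b_j\in\M_k(\mathbb C)_+$; applying $\mathrm{id}\otimes\beta$ and then $\psi$ entrywise yields $\sum_j T_n(\lambda_j)\otimes\psi(\beta(b_j))$, which is positive because each tensor factor is positive and only the mere positivity of $\psi$ on elements of $\oss_+$ is invoked. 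Since $(\mathrm{id}\otimes(\beta\circ\alpha))(T)\rightarrow T$ entrywise in norm, $\psi$ is bounded, and the positive cone of $\B(\bigoplus_1^n\K)$ is norm-closed, $\psi^{(n)}(T)\geq0$ follows. This is the argument of \cite[Corollary 7.4]{farenick2021}, with the matrix-algebra factorisation supplied by \cite{han--paulsen2011} in place of C$^*$-nuclearity.
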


Our aim here is to prove the following similar result.

\begin{theorem}\label{normal} If $\H$ is a Hilbert space of infinite dimension, then
every normal positive linear map on $\B(\H)$ is Toeplitz completely positive.
\end{theorem}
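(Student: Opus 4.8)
The plan is to reduce the statement to the finite-dimensional separation theorem (Corollary~\ref{sep thm}) by exploiting normality of $\psi$ to pass through an increasing net of finite-rank approximations. Fix a normal positive linear map $\psi:\B(\H)\to\B(\H)$ and a positive $n\times n$ Toeplitz matrix $T=[\tau_{k-j}]_{k,j=1}^n$ with entries $\tau_\ell\in\B(\H)$; we must show $\psi^{(n)}(T)=[\psi(\tau_{k-j})]_{k,j=1}^n$ is positive on $\bigoplus_1^n\H$. First I would fix an orthonormal basis of $\H$ and let $(P_\alpha)$ be the net of finite-rank projections onto finite-dimensional subspaces, directed by inclusion, so that $P_\alpha\to I$ strongly and hence, by normality, $\psi(P_\alpha x P_\alpha)\to\psi(x)$ in an appropriate weak sense. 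The key observation is that $[P_\alpha\tau_{k-j}P_\alpha]$ is again a positive Toeplitz matrix, now with entries compressed into the finite-dimensional corner $P_\alpha\B(\H)P_\alpha\cong\M_{p_\alpha}(\mathbb C)$; by the Gurvits separation theorem in the form of Corollary~\ref{sep thm}, such a compressed matrix is $\omin$-separable, i.e.
\[
[P_\alpha\tau_{k-j}P_\alpha]_{k,j=1}^n=\sum_{i} T_n(\lambda_i^{(\alpha)})\otimes b_i^{(\alpha)},
\]
with $\lambda_i^{(\alpha)}\in S^1$ and $b_i^{(\alpha)}\in(P_\alpha\B(\H)P_\alpha)_+$.

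The second step is to apply $\psi^{(n)}$ termwise to this separable decomposition. Since $\psi$ is positive, $\psi(b_i^{(\alpha)})\geq0$, and since $T_n(\lambda)$ is a \emph{positive scalar} Toeplitz matrix, $T_n(\lambda_i^{(\alpha)})\otimes\psi(b_i^{(\alpha)})$ is a positive operator on $\bigoplus_1^n\H$; hence
\[
\psi^{(n)}\bigl([P_\alpha\tau_{k-j}P_\alpha]\bigr)=\sum_i T_n(\lambda_i^{(\alpha)})\otimes\psi(b_i^{(\alpha)})\ \geq\ 0
\]
for every $\alpha$. In other words, $\psi$ is \emph{already} known to be Toeplitz completely positive on the finite-dimensional subalgebras $P_\alpha\B(\H)P_\alpha$, purely from Gurvits. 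It remains to lift positivity through the net. Here I would test against an arbitrary vector $\eta=(\eta_1,\dots,\eta_n)\in\bigoplus_1^n\H$ and write
\[
\langle \psi^{(n)}(T)\eta,\eta\rangle=\sum_{k,j=1}^n\langle\psi(\tau_{k-j})\eta_j,\eta_k\rangle.
\]
Using normality of $\psi$ together with $P_\alpha\tau_\ell P_\alpha\to\tau_\ell$ in the strong-$*$ topology, I would show each summand $\langle\psi(P_\alpha\tau_{k-j}P_\alpha)\eta_j,\eta_k\rangle$ converges to $\langle\psi(\tau_{k-j})\eta_j,\eta_k\rangle$; since the left-hand net is a net of nonnegative reals (being $\langle\psi^{(n)}([P_\alpha\tau_{k-j}P_\alpha])\eta,\eta\rangle\geq0$), the limit $\langle\psi^{(n)}(T)\eta,\eta\rangle$ is nonnegative, which is exactly what we need.

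The main obstacle is the convergence argument in the last step: ``normal positive linear map'' on $\B(\H)$ means $\psi$ is $\sigma$-weakly continuous, i.e. continuous for the weak-$*$ topology, but $P_\alpha x P_\alpha\to x$ holds in the strong-$*$ topology, not in norm, and one must be careful that $\sigma$-weak continuity is enough to conclude $\psi(P_\alpha x P_\alpha)\to\psi(x)$ $\sigma$-weakly — this uses that $P_\alpha x P_\alpha\to x$ is also $\sigma$-weakly convergent (true, since on bounded sets strong-$*$ convergence implies weak-$*$ convergence and the net $\{P_\alpha x P_\alpha\}$ is bounded by $\|x\|$) and that a $\sigma$-weakly continuous map sends bounded $\sigma$-weakly convergent nets to $\sigma$-weakly convergent nets. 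One subtlety to watch: the decomposition constant $m$ (number of elementary tensors) and the data $\lambda_i^{(\alpha)},b_i^{(\alpha)}$ depend on $\alpha$, so the decomposition itself does not converge — but this does not matter, because we only use the decomposition to certify $\psi^{(n)}([P_\alpha\tau_{k-j}P_\alpha])\geq0$ for each fixed $\alpha$, and then pass to the limit on the scalar quantities $\langle\cdot\,\eta,\eta\rangle$ directly, bypassing the decomposition entirely. A clean way to organise the write-up is therefore: (i) reduce to the finite corners via Corollary~\ref{sep thm}; (ii) record that $\psi^{(n)}(P_\alpha^{(n)}T P_\alpha^{(n)})\geq0$ for all $\alpha$, where $P_\alpha^{(n)}=\bigoplus_1^n P_\alpha$; (iii) invoke normality to get $\langle\psi^{(n)}(P_\alpha^{(n)}T P_\alpha^{(n)})\eta,\eta\rangle\to\langle\psi^{(n)}(T)\eta,\eta\rangle$ for each $\eta$; (iv) conclude.
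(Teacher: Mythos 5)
Your argument is correct, and the overall strategy is the same as the paper's: reduce to the finite-dimensional Gurvits separation theorem, observe that $\psi^{(n)}$ manifestly preserves positivity of separable elements because $T_n(\lambda)\otimes\psi(b)\geq0$, and then use normality of $\psi$ to pass to a weak limit. The organisation differs in one substantive way. The paper invokes the structure theorem for pure elements of $\left(C(S^1)^{(n)}\omin\B(\H)\right)_+$ (valid for infinite-dimensional $\H$) and asserts that finite sums of the pure elements $T_n(\lambda^{-1})\otimes(\alpha Q)$ are ultraweakly dense in the whole positive cone, then concludes by ultraweak continuity of $\psi^{(n)}$ together with ultraweak closedness of $\M_n(\B(\K))_+$. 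You instead compress the given matrix $T$ by $1_n\otimes P_\alpha$ into the finite corners $P_\alpha\B(\H)P_\alpha\cong\M_{p_\alpha}(\mathbb C)$, apply Corollary~\ref{sep thm} there, and take the limit of the scalar quantities $\langle\psi^{(n)}\bigl([P_\alpha\tau_{k-j}P_\alpha]\bigr)\eta,\eta\rangle\geq0$. This buys you two things: you never need the infinite-dimensional purity theorem, only the finite-dimensional separation result; and your compression net is precisely the kind of explicit approximation that substantiates the density assertion the paper states rather tersely (density of finite-rank positives in $\B(\H)_+$ does not by itself hand you density of separable Toeplitz matrices in the positive cone of the tensor product --- one has to produce the approximants, which is what your $(1_n\otimes P_\alpha)T(1_n\otimes P_\alpha)$ does). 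Your cautionary remarks are all on point: the decomposition data $\lambda_i^{(\alpha)},b_i^{(\alpha)},m_\alpha$ need not converge, but they are only used to certify positivity at each fixed $\alpha$; and boundedness of the net $\{P_\alpha xP_\alpha\}$ is exactly what lets you upgrade strong-$*$ convergence to $\sigma$-weak convergence before applying $\sigma$-weak continuity of $\psi$.
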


\begin{proof} For each $n\in\mathbb N$, consider $C(S^1)^{(n)}\omin\B(\H)$ 
in the topology inherited from the ultraweak topology of the von Neumann algebra
$\M_n(\mathbb C)\otimes\B(\H)$. Thus, if $\psi:\B(\H)\rightarrow\B(\K)$ is a normal
positive linear map, then $\psi^{(n)}=\mbox{\rm id}_{\M_n(\mathbb C)}\otimes\psi$ 
is continuous with respect to the ultraweak topology of $\M_n(\mathbb C)\otimes\B(\H)$.

By Theorem \ref{main result}, the extremal rays of $\left(C(S^1)^{(n)}\omin\B(\H)\right)_+$ are generated by 
elements of the form $T_n(\lambda^{-1})\otimes (\alpha Q)$, for some $\lambda\in S^1$, $\alpha\geq0$, and
rank-1 projection $Q$. 
As the set of all finite sums of elements of the form $\alpha Q$ coincides 
with all finite-rank positive linear operators on $\B(\H)$, and because this set is ultraweakly dense in
$\B(\H)_+$, the set 
\[
\left\{\sum_{j=1}^m T_n(\lambda_j^{-1})\otimes(\alpha_jQ_j)\,|\,m\in\mathbb N,\, \lambda_j\in S^1,\,\alpha_j\geq0,\,
Q_j\mbox{ is a rank-1 projection}\right\},
\]
is ultraweakly dense in $\left(C(S^1)^{(n)}\omin\B(\H)\right)_+$.  
Furthermore, the action of $\psi^{(n)}$ on a pure element $T_n(\lambda^{-1})\otimes (\alpha Q)$ is given by
\[
\psi^{(n)}\left[T_n(\lambda^{-1})\otimes (\alpha Q)\right] = T_n(\lambda^{-1})\otimes\alpha\psi(Q),
\]
which is positive in $\M_n(\mathbb C)\otimes\B(\K)$. Hence,  by ultraweak continuity,
$\psi^{(n)}$ is positive on $C(S^1)^{(n)}\omin\B(\H)$.
\end{proof}

%%%%%%%%%%%%%%%%%%%%%%
\section{Factorisation and Universality are Equivalent}

Our purpose in this paper has been to underscore the role and use of Toeplitz duality in results pertaining to 
separability and entanglement. Nevertheless, it is useful to make note of the relationship between Ando's universality theorem
\cite{ando1970} and Gurvits' separation theorem \cite{gurvits2001,gurvits--burnam2002}. 
How these two results intersect is
described by Theorem \ref{factorisation thm} below.

Before turning to Theorem \ref{factorisation thm}, let us first observe that the universal Toeplitz matrix valued function
$T_n:S^1\rightarrow\M_n(\mathbb C)$ can be evaluated, via continuous functional calculus, at any unitary operator $u$, 
thereby producing a positive $n\times n$ Toeplitz matrix $T_n(u)$ whose entries are integral powers $u^\ell$ of $u$.

\begin{theorem}[Factorisation Theorem]\label{factorisation thm}
If $T=[\tau_{k-j}]_{k,j=1}^n\in C(S^1)^{(n)}\omin\B(\H)$ is a positive Toeplitz matrix, then there are a Hilbert space $\K$, a
unitary operator $u\in \B(\K)$, and a bounded linear operator $w:\K\rightarrow\H$ such that
\begin{equation}\label{e:fac}
T= (1_n\otimes w)^*T_n(u)(1_n\otimes w),
\end{equation} 
where
$1_n$ denotes the $n\times n$ identity matrix.
If, moreover, the unitary operator $u$ has finite spectum, then $T$ is 
separable in the positive cone of $C(S^1)^{(n)}\omin\B(\H)$.
\end{theorem}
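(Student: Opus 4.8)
The plan is to prove the two assertions of the Factorisation Theorem separately, exploiting the universality property of $T_n$ recalled just before the definition of the universal Toeplitz matrix and the structure of pure elements obtained in Theorem \ref{pure thm}. For the factorisation \eqref{e:fac}, I would start from the positive Toeplitz matrix $T=[\tau_{k-j}]_{k,j=1}^n\in C(S^1)^{(n)}\omin\B(\H)$ and pass, via the duality positivity criterion \eqref{e:duality positivity gen}, to the associated completely positive map $\hat T:C(S^1)_{(n)}\rightarrow\B(\H)$ determined by $\hat T(z^\ell)=\tau_{-\ell}$. By Arveson's extension theorem, $\hat T$ extends to a completely positive map on the ambient C$^*$-algebra $C(S^1)$, to which I apply the Stinespring dilation theorem: there exist a Hilbert space $\K$, a $*$-representation $\rho:C(S^1)\rightarrow\B(\K)$, and a bounded operator $w:\H\rightarrow\K$ (note the direction of $w$) such that $\hat T(g)=w^*\rho(g)w$ for every $g\in C(S^1)$. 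Setting $u=\rho(z)$, which is a unitary in $\B(\K)$ since $z$ is unitary in $C(S^1)$, gives $\tau_{-\ell}=w^*u^\ell w$ for every $\ell$, and reassembling the Toeplitz matrix entrywise yields exactly $T=(1_n\otimes w)^*T_n(u)(1_n\otimes w)$ once one checks that the $(k,j)$ block of $T_n(u)$ is $u^{j-k}$ and matches $\tau_{k-j}$ (a routine index bookkeeping against \eqref{e:univ T}).

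For the second assertion, suppose in addition that the unitary $u\in\B(\K)$ has finite spectrum, say $\sigma(u)=\{\lambda_1,\dots,\lambda_m\}\subset S^1$. Then the continuous functional calculus gives a decomposition $1_\K=\sum_{r=1}^m p_r$ into the orthogonal spectral projections $p_r$ of $u$ corresponding to the eigenvalues $\lambda_r$, with $u=\sum_{r=1}^m\lambda_r p_r$ and hence $u^\ell=\sum_{r=1}^m\lambda_r^\ell p_r$ for all $\ell\in\mathbb Z$. Substituting this into $\tau_{k-j}=w^*u^{j-k}w$ gives $\tau_{k-j}=\sum_{r=1}^m\lambda_r^{j-k}\,w^*p_rw=\sum_{r=1}^m\lambda_r^{j-k}\,b_r$, where $b_r:=w^*p_rw$ is a positive operator in $\B(\H)$. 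Collecting the entries back into the block-Toeplitz form and comparing with \eqref{e:univ T}, this is precisely
\[
T=\sum_{r=1}^m T_n(\lambda_r^{-1})\otimes b_r,
\]
(or $T_n(\lambda_r)\otimes b_r$, depending on the sign convention fixed when matching indices in the first paragraph), which exhibits $T$ as a separable element of the positive cone of $C(S^1)^{(n)}\omin\B(\H)$.

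The main obstacle I anticipate is the indexing: one must be careful about the direction of the Stinespring map $w$, about whether $T_n(u)$ has $(k,j)$ block $u^{k-j}$ or $u^{j-k}$, and about whether the spectral expansion produces $T_n(\lambda_r)$ or $T_n(\lambda_r^{-1})$ — the identity $T_n(z^{-1})=T_n(z)^t$ recorded in Section \S2 shows these differ only by the unitary $u_n$, so separability is unaffected, but the statement should be made consistent with the convention in Corollary \ref{sep thm}. A secondary, purely cosmetic point is that in the finite-spectrum case the Hilbert space $\K$ in the Stinespring dilation can be taken finite-dimensional when $\H$ is; but since the theorem is stated for arbitrary $\H$, no finiteness of $\K$ is needed, and the argument above goes through verbatim. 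Everything else — positivity of the $b_r$, orthogonality of the $p_r$, and the block-matrix reassembly — is immediate from the functional calculus and requires no further input.
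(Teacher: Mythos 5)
Your proposal is correct and follows essentially the same route as the paper: obtain a completely positive map $C(S^1)\rightarrow\B(\H)$ sending $z^{\ell}$ to the Toeplitz entries (the paper cites Ando's theorem for this, which is interchangeable with your duality-plus-Arveson-extension derivation), apply Stinespring to get $u=\rho(z)$ and the factorisation, and then use the spectral decomposition $u^{\ell}=\sum_r\lambda_r^{\ell}p_r$ in the finite-spectrum case to exhibit $T=\sum_r T_n(\lambda_r^{-1})\otimes w^*p_rw$ as separable. The indexing caveats you flag (direction of $w$, $\lambda_r$ versus $\lambda_r^{-1}$) are exactly the points the paper also has to track, and they do not affect the argument.
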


\begin{proof} By Ando's theorem \cite{ando1970}, the positivity of $T$ implies that
there is a completely positive linear map 
$\phi:C(S^1)\rightarrow \B(\H)$ such that $\phi(\chi_\ell)=\tau_\ell$, for each $\ell=-n+1,\dots,n-1$, and where
$\chi_\ell(z)=z^\ell$.
By Stinespring's theorem \cite{stinespring1955}, there are a Hilbert space $\K$, a unital 
$^*$-representation $\pi:C(S^1)\rightarrow\B(\K)$ of $C(S^1)$, and a bounded linear operator
$w:\K\rightarrow\H$ such that $\phi(f)=w^*\pi(f) w$, for every $f\in C(S^1)$. Let $u\in\B(\K)$ be the
unitary operator given by $u=\pi(\chi_1)$. Thus,
\[
\tau_\ell = \phi(\chi_\ell) = w^* \pi(\chi_\ell) w= w^*\pi(\chi_1)^\ell w^*= w^*u^\ell w,
\]
for each $\ell$, and so
\[
T=\displaystyle\sum_{\ell=-n+1}^{n-1}r_\ell\otimes w^*u^\ell w 
=(1_n\otimes w)^*\left( \displaystyle\sum_{\ell=-n+1}^{n-1}r_\ell\otimes u^\ell\right) (1_n\otimes w),
\]
%\[
%\begin{array}{rcl}
%T&=&\displaystyle\sum_{\ell=-n+1}^{n-1}r_\ell\otimes w^*u^\ell w \\
%&=&(1_n\otimes w)^*\left( \displaystyle\sum_{\ell=-n+1}^{n-1}r_\ell\otimes u^\ell\right) (1_n\otimes w) \\
%&=& (1_n\otimes w)^*T_n(u)(1_n\otimes w),
%\end{array}
%\]
which is precisely the sought-for equation $T=(1_n\otimes w)^*T_n(u)(1_n\otimes w)$. 

Suppose now that $u\in\B(\K)$ has finite spectrum. By the spectral theorem, there are $\lambda_1,\dots,\lambda_m\in S^1$
and pairwise-orthogonal projections $q_1,\dots,q_m\in\B(\K)$ such that $u^\ell=\displaystyle\sum_{j=1}^m\lambda_j^\ell q_j$,
for all $\ell\in\mathbb Z$. Therefore,
%\[
%\begin{array}{rcl}
%T_n(u)&=&\displaystyle\sum_{\ell=-n+1}^{n-1}r_\ell\otimes (\displaystyle\sum_{j=1}^{m} \lambda_j^\ell q_j) \\
%&=& \displaystyle\sum_{j=1}^{m}\displaystyle\sum_{\ell=-n+1}^{n-1} \lambda_j^\ell r_\ell\otimes q_j \\
%&=& \displaystyle\sum_{j=1}^{m} T_n(\lambda_j^{-1})\otimes q_j.
%\end{array}
%\]
\[
T_n(u)=\displaystyle\sum_{\ell=-n+1}^{n-1}r_\ell\otimes (\displaystyle\sum_{j=1}^{m} \lambda_j^\ell q_j) 
= \displaystyle\sum_{j=1}^{m}\displaystyle\sum_{\ell=-n+1}^{n-1} \lambda_j^\ell r_\ell\otimes q_j 
= \displaystyle\sum_{j=1}^{m} T_n(\lambda_j^{-1})\otimes q_j.
\]
Hence,
$
T=(1_n\otimes w)^*T_n(u)(1_n\otimes w)=\sum_{j=1}^m T_n(\lambda_j^{-1})\otimes w^*q_jw$,
which is a separable element of the positive cone of $C(S^1)^{(n)}\omin\B(\H)$.
\end{proof}

The proof of Theorem \ref{factorisation thm} shows that the factorisation given in \eqref{e:fac} of a positive Toeplitz matrix $T$
is a consequence of Ando's universality theorem. Conversely, if $T=[\tau_{k-j}]_{i,j=1}^n$ is a Toeplitz matrix of operators 
on $\H$ whose $(k,j)$ entry, for each $k,j$, is
$\tau_{k-j}=w^*u^{k-j}w$, for some unitary $u\in\B(\K)$ and operator $w:\K\rightarrow\H$, then $T$ is positive and there is a completely positive linear
map 
$\phi:C(S^1)_{(n)}\rightarrow \B(\H)$ such that $\phi(\chi_\ell)=\tau_\ell$, for each $\ell=-n+1,\dots,n-1$. The completely positive linear map $\phi$ in question
arises from the fact that $C(S^1)$ is the universal C$^*$-generated by a unitary operator, and so there is a unital $*$-homomorphism
$\varrho:C(S^1)\rightarrow\B(\K)$ such that $\varrho(z^\ell)=u^\ell$, for every $\ell\in\mathbb Z$; thus, $\phi$ is given by $\phi(z^\ell)=w^*\varrho(z^\ell) w$,
for $\ell=-n+1,\dots,n-1$. Hence, Ando's universality theorem is a consequence of the factorisation theorem (Theorem \ref{factorisation thm}).

Separation depends on the finiteness of spectrum. 
The approach of Gurvits in \cite{gurvits2001,gurvits--burnam2002} to his separation theorem 
(Theorem \ref{main result}) is to 
show, by an elegant algebraic argument,
that a positive Toeplitz matrix $T\in C(S^1)^{(n)}\omin\M_p(\mathbb C)$ admits a factorisation of the form 
$(1_n\otimes w)^*T_n(u)(1_n\otimes w)$ in which the unitary
$u$ has finite spectrum. This conclusion can also be reached by way of the functional-analytic methods of the
present paper.

\begin{proposition} If $T\in C(S^1)^{(n)}\omin\M_p(\mathbb C)$ is positive, then 
there exist $q\in\mathbb N$, a linear map $w:\mathbb C^p\rightarrow\mathbb C^q$, and a unitary
matrix $u\in \M_q(\mathbb C)$ such that $T$ admits a factorisation of the form
$T=(1_n\otimes w)^*T_n(u)(1_n\otimes w)$.
\end{proposition}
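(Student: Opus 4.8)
The plan is to reduce the statement to the separation theorem, Corollary~\ref{sep thm} (the functional-analytic substitute, via operator system duality, for Gurvits' algebraic argument), and then to assemble a single finite-dimensional unitary matrix whose spectrum is the finite set of circle points provided by that corollary. First I would apply Corollary~\ref{sep thm} to write
\[
T=\sum_{j=1}^m T_n(\lambda_j)\otimes b_j
\]
for some $\lambda_1,\dots,\lambda_m\in S^1$ and positive $b_1,\dots,b_m\in\M_p(\mathbb C)$. Expanding each universal Toeplitz matrix in the canonical basis of $C(S^1)^{(n)}$ via $T_n(\lambda_j)=\sum_{\ell=-n+1}^{n-1}\lambda_j^\ell r_\ell$, this exhibits the canonical tensor coefficients of $T$ as $\tau_\ell=\sum_{j=1}^m\lambda_j^\ell\, b_j$, for $\ell=-n+1,\dots,n-1$.

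Next I would construct the dilation explicitly. Choose square roots $c_j=b_j^{1/2}\in\M_p(\mathbb C)$, so that $b_j=c_j^*c_j$. Set $q=mp$, identify $\mathbb C^q$ with $\bigoplus_{j=1}^m\mathbb C^p$, and define
\[
u=\bigoplus_{j=1}^m \lambda_j 1_p\in\M_q(\mathbb C),\qquad
w=\begin{bmatrix}c_1\\ \vdots\\ c_m\end{bmatrix}:\mathbb C^p\rightarrow\mathbb C^q .
\]
Since every $\lambda_j$ lies on $S^1$, the matrix $u$ is unitary, and because $w^*=\begin{bmatrix}c_1^* & \cdots & c_m^*\end{bmatrix}$ one computes, for each $\ell$,
\[
w^*u^\ell w=\sum_{j=1}^m c_j^*\,\lambda_j^\ell\, c_j=\sum_{j=1}^m \lambda_j^\ell\, b_j=\tau_\ell .
\]
Using $T_n(u)=\sum_{\ell=-n+1}^{n-1}r_\ell\otimes u^\ell$ (the continuous functional calculus of $T_n$ evaluated at the unitary $u$, as recorded before Theorem~\ref{factorisation thm}) together with the elementary tensor identity $(1_n\otimes w)^*(r_\ell\otimes u^\ell)(1_n\otimes w)=r_\ell\otimes(w^*u^\ell w)$, it follows that
\[
(1_n\otimes w)^*T_n(u)(1_n\otimes w)=\sum_{\ell=-n+1}^{n-1}r_\ell\otimes(w^*u^\ell w)=\sum_{\ell=-n+1}^{n-1}r_\ell\otimes\tau_\ell=T,
\]
which is exactly the asserted factorisation with $u\in\M_q(\mathbb C)$ a unitary matrix.

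I do not expect any genuine obstacle: all of the depth sits in Corollary~\ref{sep thm}, and what remains is the bookkeeping of a block-diagonal construction. The only points calling for a little care are the direction of the operator $w$ and the verification that the $(k,j)$ block $w^*u^{k-j}w$ of $(1_n\otimes w)^*T_n(u)(1_n\otimes w)$ really reproduces the $(k,j)$ block $\sum_{j}\lambda_j^{\,k-j}b_j$ of $T$; this is immediate once the identity $w^*u^\ell w=\tau_\ell$ is in hand. This argument is precisely the ``functional-analytic'' route to the finite-spectrum factorisation alluded to above: the finite-dimensionality of $\M_p(\mathbb C)$, through the separation theorem, forces the unitary appearing in the factorisation \eqref{e:fac} to be realisable on a finite-dimensional Hilbert space.
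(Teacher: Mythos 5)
Your proof is correct and follows the same overall route as the paper: first reduce to the separation statement of Corollary~\ref{sep thm}, then dilate the positive coefficient matrices to a unitary with finite spectrum. The one point of divergence is the dilation step: the paper treats $(b_1,\dots,b_m)$ as a positive operator-valued measure on $\{1,\dots,m\}$ and invokes Naimark's dilation theorem (plus the finite-dimensionality of minimal Stinespring dilations of a finite-dimensional abelian C$^*$-algebra) to obtain $b_j=w^*p_jw$ with $p_1,\dots,p_m$ mutually orthogonal projections and $u=\sum_j\lambda_jp_j$, whereas you construct that dilation explicitly on $\mathbb C^{mp}$ by stacking the square roots $b_j^{1/2}$ into $w$ and taking $u=\bigoplus_j\lambda_j1_p$. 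Your version is more self-contained and gives the explicit value $q=mp$; the paper's version makes the conceptual link to Naimark dilation visible. The final verification that $(1_n\otimes w)^*T_n(u)(1_n\otimes w)=T$ is the same computation in both arguments, so there is no gap.
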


\begin{proof} By Corollary \ref{sep thm}, $T=\displaystyle\sum_{j=1}^m T_n(\lambda_j)\otimes a_j$, for some $\lambda_j\in S^1$ and
positive rank-1 matrices $a_j$. View the $m$-tuple $a=(a_1,\dots,a_m)$ as a positive operator-valued measure
on the Borel sets of the point space $X=\{1,\dots,m\}$. By Naimark's Dilation Theorem \cite[Theorem 4.6]{Paulsen-book}, 
there exist a Hilbert space $\K$, mutually orthogonal projections $p_1,\dots,p_m\in\B(\K)$ such that
$\displaystyle\sum_{j=1}^m p_j=1_{\mathcal K}$, and a linear operator 
$w:\mathbb C^p\rightarrow\K$ such that $a_j=w^*p_j w$, for each $j$.
The proof in \cite{Paulsen-book} of Naimark's Dilation Theorem is an application of 
the Stinespring Dilation Theorem to the finite-dimensional abelian C$^*$-algebra $C(X)$. By Stinespring's proof \cite{stinespring1955},
minimal Stinespring dilations
of completely positive linear maps on finite-dimensional C$^*$-algebras occur on Hilbert spaces of finite dimension. Hence, the dilating space
$\mathcal K$ has the form $\mathcal K=\mathbb C^q$, for some $q\in\mathbb N$, and the operator $w$ is a linear map 
$w:\mathbb C^p\rightarrow\mathbb C^q$ satisfying $a_j=w^*p_j w$, for each $j$. 

With the unitary matrix $u=\displaystyle\sum_{j=1}^m \lambda_j p_j\in M_q(\mathbb C)$, we obtain
\[
\begin{array}{rcl}
T&=& \displaystyle\sum_{j=1}^m T_n(\lambda_j)\otimes a_j  
= \displaystyle\sum_{j=1}^m  \displaystyle\sum_{\ell=-n+1}^{n-1} \lambda_j^\ell r_\ell\otimes w^*p_jw \\ && \\
&=& (1_n\otimes w)^*\left(\displaystyle\sum_{\ell=-n+1}^{n-1} r_\ell \otimes \left[\displaystyle\sum_{j=1}^m \lambda_j^\ell p_j\right] \right)
 (1_n\otimes w),
\end{array}
\]
which proves that $T=(1_n\otimes w)^*T_n(u)(1_n\otimes w)$, as desired.
\end{proof}

%%%%%%%%%%%%%%%%%%%%%
\section*{Acknowledgement} 
We wish to thank the referee for a very careful reading of the original manuscript and for making a comment that led us to discover
Corollary \ref{c1}.

%%%%%%%%%%%%%%%%%%%%%%%%%%%%%%%%%%%%%%

%%%%%%%%%%%%%%%%%%%%%%%%
\end{document}